\providecommand{\U}[1]{\protect\rule{.1in}{.1in}}
\newtheorem{theorem}{Theorem}
\theoremstyle{plain}
\newtheorem{proposition}{Proposition}
\newtheorem{remark}{Remark}
\numberwithin{equation}{section}
\begin{document}
\title[Canal Hypersurfaces in $E_{1}^{4}$]{Canal Hypersurfaces Generated by Non-Null Curves in Lorentz-Minkowski 4-Space}
\subjclass[2010]{14J70, 53A07, 53A10.}
\keywords{Canal hypersurface, Tubular hypersurface, Lorentz-Minkowski 4-space,
Weingarten Hypersurface.}
\author[A. Kazan, M. Alt\i n and D.W. Yoon]{\bfseries Ahmet Kazan$^{1}$, Mustafa Alt\i n$^{2}$ and Dae Won Yoon$^{3}$}
\address{ \\
$^{1}$Department of Computer Technologies, Do\u{g}an\c{s}ehir Vahap
K\"{u}\c{c}\"{u}k Vocational School, Malatya Turgut \"{O}zal University,
Malatya, Turkey \\
 \\
$^{2}$Technical Sciences Vocational School, Bing\"{o}l University, Bing\"{o}l,
Turkey \\
 \\
$^{3}$Department of Mathematics Education and RINS, Gyeongsang National
University, Jinju 52828, Republic of Korea \\
 \\
$^{\ast}$Corresponding author: ahmet.kazan@ozal.edu.tr}

\begin{abstract}
In the present paper, firstly we obtain the general expression of the canal
hypersurfaces which are formed as the envelope of a family of pseudo
hyperspheres, pseudo hyperbolic hyperspheres and null hypercones in $E_{1}%
^{4}$ and give their some geometric invariants such as unit normal vector
fields, Gaussian curvatures, mean curvatures and principal curvatures. Also we
give some results about their flatness and minimality conditions and
Weingarten canal hypersurfaces. Also, we obtain these characterizations for
tubular hypersurfaces in $E_{1}^{4}$ by taking constant radius function and
finally we construct some examples and visualize them with the aid of Mathematica.

\end{abstract}
\maketitle


\section{\textbf{GENERAL INFORMATION AND\ BASIC\ CONCEPTS}}

Canal surfaces, which are the class of surfaces formed by sweeping a sphere,
have been investigated by Monge in 1850. Thus, a canal surface can be seen as
the envelope of a moving sphere with varying radius, defined by the trajectory
$\beta(s)$ of its centers and a radius function $r(s)$. If the radius function
is constant, then the canal surface is called a tubular surface or pipe
surface. Canal surfaces (especially tubular surfaces) have been applied to
many fields, such as the solid and the surface modeling for CAD/CAM,
construction of blending surfaces, shape re-construction and they are useful
to represent various objects such as pipe, hose, rope or intestine of a body
\cite{Karacany}, \cite{Ucum}. In this context, canal and tubular
(hyper)surfaces have been studied by many mathematicians in different three,
four or higher dimensional spaces (see \cite{Mahmut}, \cite{Aslan},
\cite{Yusuf}, \cite{Fu}, \cite{Garcia}, \cite{Hartman}, \cite{Izumiya},
\cite{Karacan}, \cite{Karacan2}, \cite{Karacan3}, \cite{Sezai}, \cite{Kim},
\cite{Krivos}, \cite{Maekawa}, \cite{Peter}, \cite{Ro}, \cite{Ucum},
\cite{Xu}, \cite{Kucuk} and etc.).

Since the extrinsic differential geometry of submanifolds in Lorentz-Minkowski
4-space $E_{1}^{4}$ is of special interest in Relativity Theory, lots of
studies about curves and (hyper)surfaces have been done in this space and this
motivated us to construct the canal hypersurfaces in $E_{1}^{4}$. Now, let us
recall some basic concepts for curves and hypersurfaces in $E_{1}^{4}$.

If $\overrightarrow{x}=(x_{1},x_{2},x_{3},x_{4})$, $\overrightarrow{y}%
=(y_{1},y_{2},y_{3},y_{4})$ and $\overrightarrow{z}=(z_{1},z_{2},z_{3},z_{4})$
are three vectors in $E_{1}^{4}$, then the inner product and vector product
are defined by%
\begin{equation}
\left\langle \overrightarrow{x},\overrightarrow{y}\right\rangle =-x_{1}%
y_{1}+x_{2}y_{2}+x_{3}y_{3}+x_{4}y_{4} \label{yy1}%
\end{equation}
and
\begin{equation}
\overrightarrow{x}\times\overrightarrow{y}\times\overrightarrow{z}=\det\left[
\begin{array}
[c]{cccc}%
-e_{1} & e_{2} & e_{3} & e_{4}\\
x_{1} & x_{2} & x_{3} & x_{4}\\
y_{1} & y_{2} & y_{3} & y_{4}\\
z_{1} & z_{2} & z_{3} & z_{4}%
\end{array}
\right]  , \label{yy2}%
\end{equation}
respectively.

A vector $\overrightarrow{x}\in E_{1}^{4}-\{0\}$ is called spacelike if
$\left\langle \overrightarrow{x},\overrightarrow{x}\right\rangle >0$; timelike
if $\left\langle \overrightarrow{x},\overrightarrow{x}\right\rangle <0$ and
lightlike (null) if $\left\langle \overrightarrow{x},\overrightarrow{x}%
\right\rangle =0$. In particular, the vector $\overrightarrow{x}=0$ is
spacelike. Also, the norm of the vector $\overrightarrow{x}$ is $\left\Vert
\overrightarrow{x}\right\Vert =\sqrt{\left\vert \left\langle
\overrightarrow{x},\overrightarrow{x}\right\rangle \right\vert }$. A curve
$\beta(s)$ in $E_{1}^{4}$ is spacelike, timelike or lightlike (null), if all
its velocity vectors $\beta^{\prime}(s)$ are spacelike, timelike or lightlike,
respectively and a non-null (i.e. timelike or spacelike) curve has unit speed
if $\left\langle \beta^{\prime},\beta^{\prime}\right\rangle =\mp1$
\cite{Kuhnel}.

If $\{F_{1},F_{2},F_{3},F_{4}\}$ is the moving Frenet frame along the timelike
or spacelike curve $\beta(s)$ in $E_{1}^{4},$ where we'll call $F_{1},$
$F_{2},$ $F_{3}$ and $F_{4}$ are unit tangent vector field, principal normal
vector field, binormal vector field and trinormal vector field, respectively,
then the Frenet equations can be given according to the causal characters of
non-null Frenet vector fields $F_{1},$ $F_{2},$ $F_{3}$ and $F_{4}$ as follows
\cite{Walrave}:

If the curve $\beta(s)$ is timelike, i.e. $\left\langle F_{1},F_{1}%
\right\rangle =-1,$ $\left\langle F_{2},F_{2}\right\rangle =\left\langle
F_{3},F_{3}\right\rangle =\left\langle F_{4},F_{4}\right\rangle =1,$ then%
\begin{equation}
\left[
\begin{array}
[c]{c}%
F_{1}^{\prime}\\
F_{2}^{\prime}\\
F_{3}^{\prime}\\
F_{4}^{\prime}%
\end{array}
\right]  =\left[
\begin{array}
[c]{cccc}%
0 & k_{1} & 0 & 0\\
k_{1} & 0 & k_{2} & 0\\
0 & -k_{2} & 0 & k_{3}\\
0 & 0 & -k_{3} & 0
\end{array}
\right]  \left[
\begin{array}
[c]{c}%
F_{1}\\
F_{2}\\
F_{3}\\
F_{4}%
\end{array}
\right]  ; \label{fr1}%
\end{equation}
if the curve $\beta(s)$ is spacelike with timelike principal normal vector
field $F_{2}$, i.e. $\left\langle F_{2},F_{2}\right\rangle =-1,$ $\left\langle
F_{1},F_{1}\right\rangle =\left\langle F_{3},F_{3}\right\rangle =\left\langle
F_{4},F_{4}\right\rangle =1,$ then%
\begin{equation}
\left[
\begin{array}
[c]{c}%
F_{1}^{\prime}\\
F_{2}^{\prime}\\
F_{3}^{\prime}\\
F_{4}^{\prime}%
\end{array}
\right]  =\left[
\begin{array}
[c]{cccc}%
0 & k_{1} & 0 & 0\\
k_{1} & 0 & k_{2} & 0\\
0 & k_{2} & 0 & k_{3}\\
0 & 0 & -k_{3} & 0
\end{array}
\right]  \left[
\begin{array}
[c]{c}%
F_{1}\\
F_{2}\\
F_{3}\\
F_{4}%
\end{array}
\right]  ; \label{fr2}%
\end{equation}
if the curve $\beta(s)$ is spacelike with timelike binormal vector field
$F_{3}$, i.e. $\left\langle F_{3},F_{3}\right\rangle =-1,$ $\left\langle
F_{1},F_{1}\right\rangle =\left\langle F_{2},F_{2}\right\rangle =\left\langle
F_{4},F_{4}\right\rangle =1,$ then%
\begin{equation}
\left[
\begin{array}
[c]{c}%
F_{1}^{\prime}\\
F_{2}^{\prime}\\
F_{3}^{\prime}\\
F_{4}^{\prime}%
\end{array}
\right]  =\left[
\begin{array}
[c]{cccc}%
0 & k_{1} & 0 & 0\\
-k_{1} & 0 & k_{2} & 0\\
0 & k_{2} & 0 & k_{3}\\
0 & 0 & k_{3} & 0
\end{array}
\right]  \left[
\begin{array}
[c]{c}%
F_{1}\\
F_{2}\\
F_{3}\\
F_{4}%
\end{array}
\right]  ; \label{fr3}%
\end{equation}
if the curve $\beta(s)$ is spacelike with timelike trinormal vector field
$F_{4}$, i.e. $\left\langle F_{4},F_{4}\right\rangle =-1,$ $\left\langle
F_{1},F_{1}\right\rangle =\left\langle F_{2},F_{2}\right\rangle =\left\langle
F_{3},F_{3}\right\rangle =1,$ then%
\begin{equation}
\left[
\begin{array}
[c]{c}%
F_{1}^{\prime}\\
F_{2}^{\prime}\\
F_{3}^{\prime}\\
F_{4}^{\prime}%
\end{array}
\right]  =\left[
\begin{array}
[c]{cccc}%
0 & k_{1} & 0 & 0\\
-k_{1} & 0 & k_{2} & 0\\
0 & -k_{2} & 0 & k_{3}\\
0 & 0 & k_{3} & 0
\end{array}
\right]  \left[
\begin{array}
[c]{c}%
F_{1}\\
F_{2}\\
F_{3}\\
F_{4}%
\end{array}
\right]  , \label{fr4}%
\end{equation}
where $k_{1},k_{2},k_{3}$ are the first, second and third curvatures of the
non-null curve $\beta(s).$

Furthermore, if $p$ is a fixed point in $E_{1}^{4}$ and $r$ is a positive
constant, then the pseudo-Riemannian hypersphere is defined by%
\[
S_{1}^{3}(p,r)=\{x\in E_{1}^{4}:\left\langle x-p,x-p\right\rangle =r^{2}\},
\]
the pseudo-Riemannian hyperbolic space is defined by
\[
H_{0}^{3}(p,r)=\{x\in E_{1}^{4}:\left\langle x-p,x-p\right\rangle =-r^{2}\},
\]
the pseudo-Riemannian null hypercone is defined by
\[
Q_{1}^{3}=\{x\in E_{1}^{4}:\left\langle x-p,x-p\right\rangle =0\}.
\]

In the present study, we construct the canal hypersurfaces in $E_{1}^{4}$ as
the envelope of a family of pseudo hyperspheres, pseudo hyperbolic
hyperspheres or null hypercones whose centers lie on a non-null space curve.

On the other hand, the differential geometry of different types of
(hyper)surfaces in 4-dimensional spaces has been a popular topic for
geometers, recently (\cite{Altin4}, \cite{Altin2}, \cite{Altin3},
\cite{Aydin}, \cite{Altin}, \cite{Kisi} and etc). In this context, let
$\Gamma$ be a hypersurface in $E_{1}^{4}$ given by
\begin{align}
\Gamma:U\subset E^{3}  &  \longrightarrow E_{1}^{4}\label{yy3}\\
(u_{1},u_{2},u_{3})  &  \longrightarrow\Gamma(u_{1},u_{2},u_{3})=(\Gamma
_{1}(u_{1},u_{2},u_{3}),\Gamma_{2}(u_{1},u_{2},u_{3}),\Gamma_{3}(u_{1}%
,u_{2},u_{3}),\Gamma_{4}(u_{1},u_{2},u_{3})).\nonumber
\end{align}
Then the Gauss map (i.e., the unit normal vector field), the matrix forms of
the first and second fundamental forms are%
\begin{equation}
N_{\Gamma}=\frac{\Gamma_{u_{1}}\times\Gamma_{u_{2}}\times\Gamma_{u_{3}}%
}{\left\Vert \Gamma_{u_{1}}\times\Gamma_{u_{2}}\times\Gamma_{u_{3}}\right\Vert
}, \label{4y}%
\end{equation}%
\begin{equation}
\lbrack g_{ij}]=\left[
\begin{array}
[c]{ccc}%
g_{11} & g_{12} & g_{13}\\
g_{21} & g_{22} & g_{23}\\
g_{31} & g_{32} & g_{33}%
\end{array}
\right]  \label{5y}%
\end{equation}
and%
\begin{equation}
\lbrack h_{ij}]=\left[
\begin{array}
[c]{ccc}%
h_{11} & h_{12} & h_{13}\\
h_{21} & h_{22} & h_{23}\\
h_{31} & h_{32} & h_{33}%
\end{array}
\right]  , \label{6y}%
\end{equation}
respectively. Here $g_{ij}=\left\langle \Gamma_{u_{i}},\Gamma_{u_{j}%
}\right\rangle ,$ $h_{ij}=\left\langle \Gamma_{u_{i}u_{j}},N_{\Gamma
}\right\rangle ,$ $\Gamma_{u_{i}}=\frac{\partial\Gamma}{\partial u_{i}},$
$\Gamma_{u_{i}u_{j}}=\frac{\partial^{2}\Gamma}{\partial u_{i}u_{j}},$
$i,j\in\{1,2,3\}.$

Also, the matrix of shape operator of the hypersurface (\ref{yy3}) is%
\begin{equation}
S=[a_{ij}]=[g^{ij}].[h_{ij}], \label{7yyy}%
\end{equation}
where $[g^{ij}]$ is the inverse matrix of $[g_{ij}]$.

With the aid of (\ref{5y})-(\ref{7yyy}), the Gaussian curvature and mean
curvature of a hypersurface in $E_{1}^{4}$ are given by%
\begin{equation}
K=\varepsilon\frac{\det[h_{ij}]}{\det[g_{ij}]} \label{yy4}%
\end{equation}
and%
\begin{equation}
3\varepsilon H=tr(S), \label{yy5}%
\end{equation}
respectively. Here, $\varepsilon=\left\langle N_{\Gamma},N_{\Gamma
}\right\rangle .$ We say that a hypersurface is flat or minimal, if it has
zero Gaussian or zero mean curvature, respectively. For more details about
hypersurfaces in $E_{1}^{4},$ we refer to \cite{Guler1}, \cite{Lee} and etc.

In the second section of this paper, we obtain the general expression of the
canal hypersurfaces which are formed as the envelope of a family of pseudo
hyperspheres, pseudo hyperbolic hyperspheres and null hypercones in $E_{1}%
^{4}$ and give some characterizations for them. In the third section, we
obtain these results for tubular hypersurfaces by taking constant radius
function and in the last section, we construct some examples and visualize
them with the aid of Mathematica.

\section{\textbf{CANAL HYPERSURFACES GENERATED BY NON-NULL CURVES IN }%
$E_{1}^{4}$}

In this section, firstly we construct the canal hypersurfaces which are formed
as the envelope of a family of pseudo hyperspheres, pseudo hyperbolic
hyperspheres and null hypercones in $E_{1}^{4}$. After that, we obtain some
important geometric invariants such as unit normal vector fields, Gaussian
curvatures, mean curvatures and principal curvatures of these canal
hypersurfaces in general form. Also, we give some results for flat, minimal
and Weingarten canal hypersurfaces in $E_{1}^{4}$.

\subsection{\textbf{CONSTRUCTION OF CANAL HYPERSURFACES}}

\

Here, we prove a theorem which gives us the general parametric expressions of
11 different types of canal hypersurfaces obtained by pseudo hyperspheres,
pseudo hyperbolic hyperspheres and null hypercones in $E_{1}^{4}$. Also, we
write the parametric expressions of these canal hypersurfaces explicitly.

\begin{theorem}
\label{teo1}\textit{The canal hypersurfaces which are formed as the envelope
of a family of pseudo hyperspheres or pseudo hyperbolic hyperspheres in
}$E_{1}^{4}$\textit{ generated by spacelike or timelike center curves} with
non-null Frenet vector fields \textit{can be parametrized by}%
\begin{equation}
\mathfrak{C}^{\{j;\lambda\}}{\small (s,t,w)=\beta(s)-\lambda\varepsilon}%
_{1}{\small r(s)r}^{\prime}{\small (s)F}_{1}{\small (s)\mp r(s)}%
\sqrt{{\small r}^{\prime}{\small (s)}^{2}{\small -\lambda\varepsilon}_{1}%
}\left(
{\textstyle\sum\nolimits_{i=2}^{4}}
\mathfrak{a}_{i}{\small (s,t,w)F}_{i}{\small (s)}\right)  {\small ,}%
\label{geneldenklemyy}%
\end{equation}
where%
\begin{align*}
&  \text{i) }g(F_{j},F_{j})=-1=\varepsilon_{j}\text{ \ and \ for }i\neq
j,\text{ }\varepsilon_{i}=1,\text{ }i,j\in\{1,2,3,4\};\\
& \\
&  \text{ii) \ }\left\{
\begin{array}
[c]{l}%
{\small j=1}\Rightarrow\mathfrak{a}_{2}{\small (s,t,w)=}\cos{\small t}%
\cos{\small w,}\text{ }\mathfrak{a}_{3}{\small (s,t,w)=}\sin{\small t}%
\cos{\small w,}\text{ }\mathfrak{a}_{4}{\small (s,t,w)=}\sin{\small w,}\\
\\
{\small j=2,3,4}\Rightarrow%
\begin{array}
[c]{l}%
\mathfrak{a}_{j}{\small (s,t,w)=}\cosh{\small t}\cosh{\small w,}\text{
}\mathfrak{a}_{j+1}{\small (s,t,w)=}\sinh{\small w,}\text{ }\mathfrak{a}%
_{j+2}{\small (s,t,w)=}\sinh{\small t}\cosh{\small w,}\\
\mathfrak{a}_{5}{\small (s,t,w)=}\mathfrak{a}_{2}{\small (s,t,w),}\text{
}\mathfrak{a}_{6}{\small (s,t,w)=}\mathfrak{a}_{3}{\small (s,t,w);}%
\end{array}
\end{array}
\right.
\end{align*}
also, we suppose $r^{\prime}(s)^{2}>\lambda\varepsilon_{1}$ and \textit{if the
canal hypersurface is foliated by pseudo hyperspheres or pseudo hyperbolic
hyperspheres, then }$\lambda=1$\textit{ or }$\lambda=-1,$\textit{
respectively.}

\textit{Furthermore, the canal hypersurfaces which are formed as the envelope
of a family of null hypercones can be parametrized by}%
\begin{equation}
{\small \mathfrak{C}}^{\{j;0\}}{\small (s,t,w)=\beta(s)\mp}\left(
{\textstyle\sum\nolimits_{i=2}^{4}}
a_{i}(s,t,w)F_{i}(s)\right)  {\small ,}\text{ \ }{\small j=2,3,4}
\label{geneldenklem2}%
\end{equation}
\textit{where }${\small a}_{i}{\small (s,t,w)}$ are arbitrary functions
satisfying\textit{ }%
\[%
{\textstyle\sum\nolimits_{i=2}^{4}}
{\small \varepsilon}_{i}{\small a}_{i}^{2}{\small (s,t,w)=0.}%
\]
\textit{It is obvious from this expression that, the canal hypersurface
}$\mathfrak{C}^{\{1;0\}}$\textit{ cannot be defined. }

\textit{Here, the center curve which generates the canal hypersurface is
timelike or spacelike if }$j=1$ or $j=2,3,4$\textit{, respectively. }
\end{theorem}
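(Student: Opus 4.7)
The plan is to derive the parametrization from first principles by applying the standard envelope construction. I would set the defining function of the family as $F(x,s) = \langle x - \beta(s),\, x - \beta(s) \rangle - \lambda\, r(s)^{2}$, where $\lambda = 1$, $\lambda = -1$, $\lambda = 0$ correspond respectively to pseudo hyperspheres, pseudo hyperbolic hyperspheres, and null hypercones centered at $\beta(s)$. The envelope is characterized by the two simultaneous conditions $F(x,s) = 0$ and $\partial F/\partial s (x,s) = 0$; using $\beta' = F_{1}$, the second condition reduces to $\langle x - \beta(s),\, F_{1}(s) \rangle = -\lambda\, r(s)\, r'(s)$.

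Next, I would expand the position vector from the center in the Frenet frame, writing $x - \beta(s) = \sum_{i=1}^{4} c_{i}(s,t,w)\, F_{i}(s)$. The orthogonality relations $\langle F_{i}, F_{j} \rangle = \varepsilon_{i}\delta_{ij}$, with signs prescribed by each of the four cases $j \in \{1,2,3,4\}$ listed in (\ref{fr1})--(\ref{fr4}), immediately yield $c_{1} = -\lambda\, \varepsilon_{1}\, r(s)\, r'(s)$, which matches the $F_{1}$-coefficient in \eqref{geneldenklemyy}. Substituting this value back into the sphere equation $\sum_{i=1}^{4} \varepsilon_{i}\, c_{i}^{2} = \lambda\, r^{2}$ and simplifying produces the reduced quadratic constraint
\[
\sum_{i=2}^{4} \varepsilon_{i}\, c_{i}^{2} \;=\; -\varepsilon_{1}\, r(s)^{2}\, \bigl(r'(s)^{2} - \lambda\, \varepsilon_{1}\bigr)
\]
on the three components lying in the normal $3$-plane $\mathrm{span}\{F_{2}, F_{3}, F_{4}\}$.

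For $\lambda = \pm 1$, the standing hypothesis $r'(s)^{2} > \lambda\, \varepsilon_{1}$ makes the factor $r\sqrt{r'^{2} - \lambda\, \varepsilon_{1}}$ real, so I would set $c_{i} = \mp\, r\sqrt{r'^{2} - \lambda\, \varepsilon_{1}}\,\mathfrak{a}_{i}(s,t,w)$; the constraint then collapses to $\sum_{i=2}^{4} \varepsilon_{i}\, \mathfrak{a}_{i}^{2} = -\varepsilon_{1}$. In the case $j = 1$ one has $\varepsilon_{1} = -1$ and $\varepsilon_{2} = \varepsilon_{3} = \varepsilon_{4} = 1$, so this is the unit Euclidean $2$-sphere, parametrized by $(\cos t \cos w,\, \sin t \cos w,\, \sin w)$. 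In the cases $j \in \{2,3,4\}$ one has $\varepsilon_{1} = 1$ and the normal $3$-plane is Lorentzian with the timelike slot in position $j$; the constraint then cuts out the pseudosphere of radius one, parametrized by $(\cosh t \cosh w,\, \sinh w,\, \sinh t \cosh w)$ with the $\cosh t \cosh w$ entry placed in the timelike slot. Reading the indices cyclically via the identifications $\mathfrak{a}_{5} = \mathfrak{a}_{2}$ and $\mathfrak{a}_{6} = \mathfrak{a}_{3}$ recovers the precise assignment stated in the theorem. For the null-cone case $\lambda = 0$, the envelope derivative forces $c_{1} = 0$ and the sphere equation degenerates to $\sum_{i=2}^{4} \varepsilon_{i}\, c_{i}^{2} = 0$; when $j = 1$ the normal $3$-plane is positive definite and admits only the trivial solution, explaining why $\mathfrak{C}^{\{1;0\}}$ cannot be defined, whereas for $j \in \{2,3,4\}$ the Lorentzian signature admits a two-parameter family of nontrivial null vectors and yields \eqref{geneldenklem2}.

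The main obstacle I anticipate is organizational rather than conceptual: keeping track of the signs $\varepsilon_{i}$ and the cyclic index shifts across the eleven sub-cases without error, and verifying that the specific trigonometric and hyperbolic parametrizations listed actually satisfy the reduced constraint in each signature configuration. The geometric kernel---an envelope condition combined with a Frenet-frame decomposition---is uniform across all cases; what must be done case by case is simply to place the timelike direction correctly within the normal hyperboloid, or to observe its absence in the positive-definite case that rules out $\mathfrak{C}^{\{1;0\}}$.
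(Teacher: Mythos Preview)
Your proposal is correct and follows essentially the same strategy as the paper: expand $\mathfrak{C}-\beta$ in the Frenet frame, impose the quadric constraint $\sum_i \varepsilon_i a_i^2=\lambda r^2$ together with the envelope condition to force $a_1=-\lambda\varepsilon_1 r r'$, and then parametrize the reduced quadric $\sum_{i=2}^4 \varepsilon_i a_i^2=-\varepsilon_1 r^2(r'^2-\lambda\varepsilon_1)$ by the listed (hyperbolic-)trigonometric functions in each signature case. The only tactical difference is that you obtain $a_1$ directly from $\partial_s F(x,s)=0$, whereas the paper reaches the same identity by computing $\mathfrak{C}_s$ through the Frenet equations and imposing $g(\mathfrak{C}-\beta,\mathfrak{C}_s)=0$; the two formulations are equivalent and your shortcut avoids the intermediate Frenet expansion.
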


\begin{proof}
Let $\beta:I\subseteq%
\mathbb{R}
\rightarrow E_{1}^{4}$ be a spacelike or timelike center curve parametrized by
arc-length with non-zero curvature and $g(F_{j},F_{j})=\varepsilon_{j}=-1$,
where $F_{1}(s),$ $F_{2}(s),$ $F_{3}(s),$ $F_{4}(s)$ are unit tangent,
principal normal, binormal and trinormal vectors of $\beta(s)$, respectively.
Then, the parametrization of the envelope of pseudo hyperspheres ($\lambda=1$)
(resp. pseudo hyperbolic hyperspheres ($\lambda=-1$) or null hypercones
($\lambda=0$)) defining the canal hypersurfaces $\mathfrak{C}^{\{j;\lambda\}}$
in $E_{1}^{4}$ can be given by%
\begin{equation}
{\small \mathfrak{C}}^{\{j;\lambda\}}{\small (s,t,w)-\beta(s)=a}%
_{1}{\small (s,t,w)F}_{1}{\small (s)+a_{2}(s,t,w)F}_{2}{\small (s)+a_{3}%
(s,t,w)F}_{3}{\small (s)+a_{4}(s,t,w)F}_{4}{\small (s),} \label{1}%
\end{equation}
where $a_{i}(s,t,w)$ are differentiable functions of $s,t,w$ on the interval
$I$. Furthermore, since $\mathfrak{C}^{\{j;\lambda\}}(s,t,w)$ lies on the
pseudo hyperspheres (resp. pseudo hyperbolic hyperspheres or null hypercones),
we have%
\begin{equation}
{\small g(\mathfrak{C}}^{\{j;\lambda\}}{\small (s,t,w)-\beta(s),\mathfrak{C}%
}^{\{j;\lambda\}}{\small (s,t,w)-\beta(s))=\lambda r}^{2}{\small (s)}
\label{2}%
\end{equation}
which leads to from (\ref{1}) that%
\begin{equation}
{\small \varepsilon}_{1}{\small a}_{1}^{2}{\small +\varepsilon}_{2}%
{\small a}_{2}^{2}{\small +\varepsilon}_{3}{\small a}_{3}^{2}%
{\small +\varepsilon}_{4}{\small a}_{4}^{2}{\small =\lambda r}^{2} \label{3}%
\end{equation}
and%
\begin{equation}
{\small \varepsilon}_{1}{\small a}_{1}a_{1_{s}}{\small +\varepsilon}%
_{2}{\small a}_{2}a_{2_{s}}{\small +\varepsilon}_{3}{\small a}_{3}a_{3_{s}%
}{\small +\varepsilon}_{4}{\small a}_{4}a_{4_{s}}{\small =\lambda rr}%
_{s}{\small ,} \label{4}%
\end{equation}
where $r(s)$ is the radius function; $r=r(s),$ $r_{s}=\frac{dr(s)}{ds},$
$a_{i}=a_{i}(s,t,w),$ $a_{i_{s}}=\frac{\partial a_{i}(s,t,w)}{\partial s}$.

Also from (\ref{fr1})-(\ref{fr4}), the non-null Frenet vectors $F_{i}(s),$
$i\in\{1,2,3,4\},$ of the spacelike or timelike curve $\beta(s)$ satisfy the
relations%
\begin{equation}
\left.
\begin{array}
[c]{l}%
{\small F}_{1}^{\prime}{\small (s)=k}_{1}{\small (s)F}_{2}{\small (s),}\\
{\small F}_{2}^{\prime}{\small (s)=\varepsilon}_{3}{\small \varepsilon}%
_{4}{\small k}_{1}{\small (s)F}_{1}{\small (s)+k}_{2}{\small (s)F}%
_{3}{\small (s),}\\
{\small F}_{3}^{\prime}{\small (s)=\varepsilon}_{1}{\small \varepsilon}%
_{4}{\small k}_{2}{\small (s)F}_{2}{\small (s)+k}_{3}{\small (s)F}%
_{4}{\small (s),}\\
{\small F}_{4}^{\prime}{\small (s)=\varepsilon}_{1}{\small \varepsilon}%
_{2}{\small k}_{3}{\small (s)F}_{3}{\small (s).}%
\end{array}
\right\}  \label{frenet}%
\end{equation}

Here, $\beta(s)$ is a\ timelike curve if $\varepsilon_{1}=-1$. Also,
$\beta(s)$ is a spacelike curve with timelike principal normal $F_{2}$ or
timelike binormal $F_{3}$ or timelike trinormal $F_{4}$ if $\varepsilon
_{2}=-1$ or $\varepsilon_{3}=-1$ or $\varepsilon_{4}=-1,$ respectively, where
$g(F_{i},F_{i})=\varepsilon_{i}$, $i\in\{1,2,3,4\}.$

So, differentiating (\ref{1}) with respect to $s$ and using the Frenet formula
(\ref{frenet}), we get%
\begin{equation}
{\small \mathfrak{C}}_{s}^{\{j;\lambda\}}{\small =}\left(
{\small 1+\varepsilon}_{3}{\small \varepsilon}_{4}{\small a}_{2}{\small k}%
_{1}{\small +}a_{1_{s}}\right)  {\small F}_{1}{\small +}\left(  {\small a}%
_{1}{\small k}_{1}{\small +\varepsilon}_{1}{\small \varepsilon}_{4}%
{\small a}_{3}{\small k}_{2}{\small +}a_{2_{s}}\right)  {\small F}_{2}+\left(
{\small a}_{2}{\small k}_{2}{\small +\varepsilon}_{1}{\small \varepsilon}%
_{2}{\small a}_{4}{\small k}_{3}{\small +}a_{3_{s}}\right)  {\small F}%
_{3}{\small +}\left(  {\small a}_{3}{\small k}_{3}{\small +}a_{4_{s}}\right)
{\small F}_{4}, \label{5'}%
\end{equation}
where $\mathfrak{C}_{s}^{\{j;\lambda\}}=\frac{\partial\mathfrak{C}%
_{s}^{\{j;\lambda\}}(s,t,w)}{\partial s}$. Furthermore, $\mathfrak{C}%
^{\{j;\lambda\}}(s,t,w)-\beta(s)$ is a normal vector to the canal
hypersurfaces, which implies that%
\begin{equation}
{\small g(\mathfrak{C}}^{\{j;\lambda\}}{\small (s,t,w)-\beta(s),\mathfrak{C}%
}_{s}^{\{j;\lambda\}}{\small (s,t,w))=0} \label{6}%
\end{equation}
and so, from (\ref{1}), (\ref{5'}) and (\ref{6}) we have%
\begin{align}
&  {\small \varepsilon}_{1}{\small a}_{1}\left(  {\small 1+\varepsilon}%
_{3}{\small \varepsilon}_{4}{\small a}_{2}{\small k}_{1}{\small +}a_{1_{s}%
}\right)  +{\small \varepsilon}_{2}{\small a}_{2}\left(  {\small a}%
_{1}{\small k}_{1}{\small +\varepsilon}_{1}{\small \varepsilon}_{4}%
{\small a}_{3}{\small k}_{2}{\small +}a_{2_{s}}\right)  \text{ }\nonumber\\
&  {\small +\varepsilon}_{3}{\small a}_{3}\left(  {\small a}_{2}{\small k}%
_{2}{\small +\varepsilon}_{1}{\small \varepsilon}_{2}{\small a}_{4}%
{\small k}_{3}{\small +}a_{3_{s}}\right)  {\small +\varepsilon}_{4}%
{\small a}_{4}\left(  {\small a}_{3}{\small k}_{3}{\small +}a_{4_{s}}\right)
{\small =0.} \label{6y}%
\end{align}
Using (\ref{4}) in (\ref{6y}), we get%
\begin{equation}
{\small \varepsilon}_{1}{\small a}_{1}{\small +\varepsilon}_{1}{\small a}%
_{1}a_{1_{s}}{\small +\varepsilon}_{2}{\small a}_{2}a_{2_{s}}%
{\small +\varepsilon}_{3}{\small a}_{3}a_{3_{s}}{\small +\varepsilon}%
_{4}{\small a}_{4}a_{4_{s}}{\small =0} \label{7}%
\end{equation}
and thus, from (\ref{4}) and the definition of $\varepsilon_{i}$, we obtain
\begin{equation}
{\small a}_{1}{\small =-\varepsilon}_{1}{\small \lambda rr}_{s}{\small .}
\label{7y}%
\end{equation}
Hence, using (\ref{7y}) in (\ref{3}), we reach that%
\begin{equation}
{\small \varepsilon}_{1}{\small (\varepsilon}_{2}{\small a}_{2}^{2}%
{\small +\varepsilon}_{3}{\small a}_{3}^{2}{\small +\varepsilon}_{4}%
{\small a}_{4}^{2}{\small )=-r}^{2}{\small (-\varepsilon}_{1}{\small \lambda
+\lambda}^{2}{\small r}_{s}^{2}{\small ).} \label{8}%
\end{equation}
Here from (\ref{8});

\begin{description}
\item[i] if $\lambda=1$ or $\lambda=-1,$ then the canal hypersurfaces
$\mathfrak{C}^{\{j;\lambda\}}$ which is formed as the envelope of a family of
pseudo hyperspheres or pseudo hyperbolic hyperspheres in $E_{1}^{4}$ generated
by spacelike or timelike center curves can be parametrized by
(\ref{geneldenklemyy});

\item[ii] if $\lambda=0,$ then the canal hypersurfaces $\mathfrak{C}%
^{\{j;0\}}$ which is formed as the envelope of a family of null hypercones in
$E_{1}^{4}$ generated by spacelike or timelike center curves can be
parametrized by (\ref{geneldenklem2})
\end{description}

and this completes the proof.
\end{proof}

Thus, the explicit parametric expressions of the canal hypersurfaces
$\mathfrak{C}^{\{j;\lambda\}}(s,t,w)$ and $\mathfrak{C}^{\{j;0\}}(s,t,w)$ from
Theorem \ref{teo1} are as follows:\newline%
\begin{equation}
\left.
\begin{array}
[c]{l}%
{\small \mathfrak{C}}^{\{1;1\}}{\small =\beta+rr}^{\prime}{\small F}%
_{1}{\small \mp r}\sqrt{{\small r}^{\prime}{\small {}}^{2}+1}\left(
\cos{\small t}\cos{\small wF}_{2}{\small +}\sin{\small t}\cos{\small wF}%
_{3}{\small +}\sin{\small wF}_{4}\right)  ,\\
\\
{\small \mathfrak{C}}^{\{1;-1\}}{\small =\beta-rr}^{\prime}{\small F}%
_{1}{\small \mp r}\sqrt{{\small r}^{\prime}{\small {}}^{2}{\small -1}}\left(
\cos{\small t}\cos{\small wF}_{2}{\small +}\sin{\small t}\cos{\small wF}%
_{3}{\small +}\sin{\small wF}_{4}\right)  ,\\
\\
{\small \mathfrak{C}}^{\{2;1\}}{\small =\beta-rr}^{\prime}{\small F}%
_{1}{\small \mp r}\sqrt{{\small r}^{\prime}{\small {}}^{2}{\small -1}}\left(
\cosh{\small t}\cosh{\small wF}_{2}{\small +}\sinh{\small wF}_{3}%
{\small +}\sinh{\small t}\cosh{\small wF}_{4}\right)  ,\\
\\
{\small \mathfrak{C}}^{\{2;-1\}}{\small =\beta+rr}^{\prime}{\small F}%
_{1}{\small \mp r}\sqrt{{\small r}^{\prime}{\small {}}^{2}+1}\left(
\cosh{\small t}\cosh{\small wF}_{2}{\small +}\sinh{\small wF}_{3}%
{\small +}\sinh{\small t}\cosh{\small wF}_{4}\right)  ,\\
\\
{\small \mathfrak{C}}^{\{3;1\}}{\small =\beta-rr}^{\prime}{\small F}%
_{1}{\small \mp r}\sqrt{{\small r}^{\prime}{\small {}}^{2}{\small -1}}\left(
\sinh{\small t}\cosh{\small wF}_{2}{\small +}\cosh{\small t}\cosh
{\small wF}_{3}{\small +}\sinh{\small wF}_{4}\right)  ,\\
\\
{\small \mathfrak{C}}^{\{3;-1\}}{\small =\beta+rr}^{\prime}{\small F}%
_{1}{\small \mp r}\sqrt{{\small r}^{\prime}{\small {}}^{2}+1}\left(
\sinh{\small t}\cosh{\small wF}_{2}{\small +}\cosh{\small t}\cosh
{\small wF}_{3}{\small +}\sinh{\small wF}_{4}\right)  ,\\
\\
{\small \mathfrak{C}}^{\{4;1\}}{\small =\beta-rr}^{\prime}{\small F}%
_{1}{\small \mp r}\sqrt{{\small r}^{\prime}{\small {}}^{2}{\small -1}}\left(
\sinh{\small wF}_{2}{\small +}\sinh{\small t}\cosh{\small wF}_{3}%
{\small +}\cosh{\small t}\cosh{\small wF}_{4}\right)  ,\\
\\
{\small \mathfrak{C}}^{\{4;-1\}}{\small =\beta+rr}^{\prime}{\small F}%
_{1}{\small \mp r}\sqrt{{\small r}^{\prime}{\small {}}^{2}+1}\left(
\sinh{\small wF}_{2}{\small +}\sinh{\small t}\cosh{\small wF}_{3}%
{\small +}\cosh{\small t}\cosh{\small wF}_{4}\right)
\end{array}
\right\}  \label{canaldenk}%
\end{equation}
and%
\begin{equation}
\left.
\begin{array}
[c]{l}%
{\small \mathfrak{C}}^{\{2;0\}}{\small =\beta\mp}\sqrt{{\small a}_{3}%
^{2}{\small +a}_{4}^{2}}{\small F}_{2}{\small +a}_{3}{\small F}_{3}%
{\small +a}_{4}{\small F}_{4}{\small ,}\\
\\
{\small \mathfrak{C}}^{\{3;0\}}{\small =\beta+a}_{2}{\small F}_{2}{\small \mp
}\sqrt{{\small a}_{2}^{2}{\small +a}_{4}^{2}}{\small F}_{3}{\small +a}%
_{4}{\small F}_{4}{\small ,}\\
\\
{\small \mathfrak{C}}^{\{4;0\}}{\small =\beta+a}_{2}{\small F}_{2}%
{\small +a}_{3}{\small F}_{3}{\small \mp}\sqrt{{\small a}_{2}^{2}%
{\small +a}_{3}^{2}}{\small F}_{4}{\small .}%
\end{array}
\right\}  \label{canaldenk2}%
\end{equation}

\begin{remark}
As one can see in Theorem \ref{teo1}, we deal with the canal hypersurfaces
satisfying $r^{\prime}(s)^{2}>\lambda\varepsilon_{1}.$ If we assume that
$r^{\prime}(s)^{2}<\lambda\varepsilon_{1}$, then the canal hypersurfaces
{\small $\mathfrak{C}$}$^{\{2;1\}},$ {\small $\mathfrak{C}$}$^{\{3;1\}}$ and
{\small $\mathfrak{C}$}$^{\{4;1\}}$ can be rewritten as%
\begin{equation}
\left.
\begin{array}
[c]{l}%
{\small \mathfrak{C}}^{\{2;1\}}{\small =\beta-rr}^{\prime}{\small F}%
_{1}{\small \mp r}\sqrt{{\small 1-r}^{\prime}{\small {}}^{2}}\left(
\cosh{\small t}\sinh{\small wF}_{2}{\small +}\cosh{\small wF}_{3}%
{\small +}\sinh{\small t}\sinh{\small wF}_{4}\right)  ,\\
\\
{\small \mathfrak{C}}^{\{3;1\}}{\small =\beta-rr}^{\prime}{\small F}%
_{1}{\small \mp r}\sqrt{{\small 1-r}^{\prime}{\small {}}^{2}}\left(
\sinh{\small t\sinh wF}_{2}{\small +}\cosh{\small t\sinh wF}_{3}{\small +\cosh
wF}_{4}\right)  ,\\
\\
{\small \mathfrak{C}}^{\{4;1\}}{\small =\beta-rr}^{\prime}{\small F}%
_{1}{\small \mp r}\sqrt{{\small 1-r}^{\prime}{\small {}}^{2}}\left(
\cosh{\small wF}_{2}{\small +}\sinh{\small t\sinh wF}_{3}{\small +}%
\cosh{\small t\sinh wF}_{4}\right)  .
\end{array}
\right\}  \label{canaldenky}%
\end{equation}

\end{remark}

\subsection{\textbf{CURVATURES OF CANAL HYPERSURFACES}}

\

At the beginning of this subsection, we must note that we'll obtain the
following results by taking $r^{\prime}(s)^{2}>\lambda\varepsilon_{1}$ and
"$\mp$" which is in (\ref{geneldenklemyy}) as "$+$". Similar characterizations
can be obtained by taking $r^{\prime}(s)^{2}>\lambda\varepsilon_{1}$ and
"$\mp$" as "$-$", (or $r^{\prime}(s)^{2}<\lambda\varepsilon_{1}$and "$+$" or
"$-$").

Here, we'll obtain some important geometric invariants of these canal
hypersurfaces by proving the following theorem:

\begin{theorem}
\textit{The unit normal vector fields }$N^{\{j;\lambda\}}$\textit{, Gaussian
curvatures }$K^{\{j;\lambda\}}$\textit{, mean curvatures }$H^{\{j;\lambda\}}%
$\textit{and princaple curvatures} ${\small \mu}_{i}^{\{j;\lambda\}}%
,i\in\{1,2,3\}$, \textit{of the canal hypersurfaces }$\mathfrak{C}%
^{\{j;\lambda\}}(s,t,w)$, given by \textit{(\ref{geneldenklemyy}) in
$E_{1}^{4}$, are}%
\begin{align}
&  {\small N}^{\{j;\lambda\}}{\small =-\varepsilon}_{3}{\small \varepsilon
}_{4}{\small \lambda}^{j}\left(  {\small -\lambda\varepsilon}_{1}%
{\small r}^{\prime}{\small F}_{1}{\small (s)+}\sqrt{r^{\prime}{}^{2}%
-\lambda\varepsilon_{1}}\left(
{\textstyle\sum\nolimits_{i=2}^{4}}
\mathfrak{a}_{i}{\small (s,t,w)F}_{i}{\small (s)}\right)  \right)
{\small ,}\label{normal}\\
& \nonumber\\
&  {\small K}^{\{j;\lambda\}}{\small =}\frac{{\small \varepsilon}%
_{3}{\small \varepsilon}_{4}{\small \lambda}^{j}\left(  {\small rk}_{1}%
^{2}{\small f}_{j}^{2}\left(  r^{\prime}{}^{2}-\lambda\varepsilon_{1}\right)
{\small +r}^{\prime\prime}\left(  r^{\prime}{}^{2}-\lambda\varepsilon
_{1}+rr^{\prime\prime}\right)  {\small +\varepsilon}_{2}{\small \lambda k}%
_{1}{\small f}_{j}\sqrt{r^{\prime}{}^{2}-\lambda\varepsilon_{1}}\left(
r^{\prime}{}^{2}-\lambda\varepsilon_{1}{\small +2rr}^{\prime\prime}\right)
\right)  }{{\small r}^{2}\left(  r^{\prime}{}^{2}-\lambda\varepsilon
_{1}{\small +\varepsilon_{2}\lambda rk}_{1}{\small f}_{j}\sqrt{r^{\prime}%
{}^{2}-\lambda\varepsilon_{1}}{\small +rr}^{\prime\prime}\right)  ^{2}%
},\label{Kgenel}\\
& \nonumber\\
&  {\small H}^{\{j;\lambda\}}{\small =}\frac{{\small \varepsilon}%
_{3}{\small \varepsilon}_{4}{\small \lambda}^{^{j}}}{3}\left(  \frac{2}%
{r}{\small +}\frac{\left(  {\small rk}_{1}^{2}{\small f}_{j}^{2}\left(
r^{\prime}{}^{2}-\lambda\varepsilon_{1}\right)  {\small +r}^{\prime\prime
}\left(  r^{\prime}{}^{2}-\lambda\varepsilon_{1}{\small +rr}^{\prime\prime
}\right)  {\small +\varepsilon_{2}\lambda k}_{1}{\small f}_{j}\sqrt{r^{\prime
}{}^{2}-\lambda\varepsilon_{1}}\left(  r^{\prime}{}^{2}-\lambda\varepsilon
_{1}{\small +2rr}^{\prime\prime}\right)  \right)  }{\left(  r^{\prime}{}%
^{2}-\lambda\varepsilon_{1}+{\small \varepsilon}_{2}{\small \lambda rk}%
_{1}{\small f}_{j}\sqrt{r^{\prime}{}^{2}-\lambda\varepsilon_{1}}%
{\small +rr}^{\prime\prime}\right)  ^{2}}\right)  ,\label{Hgenel}\\
& \nonumber\\
&  \left.
\begin{array}
[c]{l}%
{\small \mu}_{1}^{\{j;\lambda\}}{\small =\mu}_{2}^{\{j;\lambda\}}%
{\small =}\frac{\varepsilon_{3}\varepsilon_{4}\lambda^{^{j}}}{r},\\
\\
{\small \mu}_{3}^{\{j;\lambda\}}{\small =}\frac{{\small \varepsilon}%
_{3}{\small \varepsilon}_{4}{\small \lambda}^{^{j}}\left(  {\small rk}_{1}%
^{2}{\small f}_{j}^{2}\left(  r^{\prime}{}^{2}-\lambda\varepsilon_{1}\right)
{\small +r}^{\prime\prime}\left(  r^{\prime}{}^{2}-\lambda\varepsilon
_{1}{\small +rr}^{\prime\prime}\right)  {\small +\varepsilon}_{2}%
{\small \lambda k}_{1}{\small f}_{j}\sqrt{r^{\prime}{}^{2}-\lambda
\varepsilon_{1}}\left(  r^{\prime}{}^{2}-\lambda\varepsilon_{1}{\small +2rr}%
^{\prime\prime}\right)  \right)  }{\left(  r^{\prime}{}^{2}-\lambda
\varepsilon_{1}{\small +\varepsilon}_{2}{\small \lambda rk}_{1}{\small f}%
_{j}\sqrt{r^{\prime}{}^{2}-\lambda\varepsilon_{1}}{\small +rr}^{\prime\prime
}\right)  ^{2}},
\end{array}
\right\}  \label{aslitoplu}%
\end{align}
where%
\begin{equation}
{\small f}_{j}{\small =}\left\{
\begin{array}
[c]{l}%
\cos{\small t}\cos{\small w;}\text{ for }{\small j=1,}\\
\cosh{\small t}\cosh{\small w;}\text{ for }{\small j=2,}\\
\sinh{\small t}\cosh{\small w;}\text{ for }{\small j=3,}\\
\sinh{\small w;}\text{ for }{\small j=4.}%
\end{array}
\right.  \label{fj}%
\end{equation}

\end{theorem}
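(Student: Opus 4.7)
The plan is to work directly from the parametrization \eqref{geneldenklemyy} together with the Frenet frame relations \eqref{frenet}, treating all eleven cases uniformly via the sign symbols $\varepsilon_i$ and the auxiliary functions $\mathfrak{a}_i$, $f_j$. First I would compute the three partial derivatives $\mathfrak{C}^{\{j;\lambda\}}_s$, $\mathfrak{C}^{\{j;\lambda\}}_t$, $\mathfrak{C}^{\{j;\lambda\}}_w$. The $t$- and $w$-derivatives only hit the $\mathfrak{a}_i$, so they are linear combinations of $F_2,F_3,F_4$ with coefficients in $\mathfrak{a}_i$; the $s$-derivative is computed using the unified Frenet equations \eqref{frenet} and simplified via the envelope identity \eqref{7y} which gave $a_1=-\varepsilon_1\lambda rr'$. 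The crucial observation, coming straight from the envelope condition \eqref{6} and from the identity \eqref{2}, is that the radial vector $\mathfrak{C}^{\{j;\lambda\}}-\beta$ is $g$-orthogonal to all three tangent vectors; so the unit normal must be proportional to this radial vector, and dividing by its norm $r\sqrt{|\lambda|}$ (together with choosing the sign so that $g(N,N)=\varepsilon$ matches) produces the claimed formula \eqref{normal}, with the $\lambda^{j}$ and $-\varepsilon_3\varepsilon_4$ factors absorbing the case distinctions.

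Next I would assemble the matrices $[g_{ij}]$ and $[h_{ij}]$. Because $\partial_t\mathfrak{a}_i$ and $\partial_w\mathfrak{a}_i$ involve only trigonometric/hyperbolic functions of $t,w$, the block $(i,j)\in\{2,3\}^2$ of $[g_{ij}]$ decouples from the $s$-row (the off-diagonal mixing is annihilated after using the identities $\sum_{i=2}^{4}\varepsilon_i\mathfrak{a}_i^2=\lambda$ and its $t$-, $w$-derivatives). This block will turn out to be diagonal with entries that are clean products of $r^2$ with $\cos^2 w$, $\cosh^2 w$ or $1$, while $g_{ss}$ collects all the interesting Frenet/curvature terms and produces the factor $\bigl(r'^2-\lambda\varepsilon_1+\varepsilon_2\lambda rk_1 f_j\sqrt{r'^2-\lambda\varepsilon_1}+rr''\bigr)^2$ that appears in the denominators of \eqref{Kgenel}--\eqref{aslitoplu}. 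The second fundamental form $h_{ij}=g(\mathfrak{C}_{u_iu_j},N)$ is computed by taking one more derivative and contracting against the already-found normal; the same block-diagonal structure persists, and the numerator of $h_{ss}$ assembles exactly into the long polynomial that appears in $K$ and $\mu_3$.

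Once the two fundamental forms are in hand the curvatures follow by direct substitution into \eqref{yy4} and \eqref{yy5}: the determinant quotient produces \eqref{Kgenel} because the $(t,w)$-blocks of $[g_{ij}]$ and $[h_{ij}]$ share a common factor that cancels, and the trace of $S=[g^{ij}][h_{ij}]$ yields the two copies of $1/r$ from the diagonal $(t,w)$-block (explaining $\mu_1=\mu_2=\varepsilon_3\varepsilon_4\lambda^j/r$) plus one remaining eigenvalue $\mu_3$ from the $s$-entry. Since the shape operator is block diagonal in the $\{s\}$ versus $\{t,w\}$ split, its eigenvalues can be read off without solving a cubic, and combining them gives $3\varepsilon H = \mu_1+\mu_2+\mu_3$, which matches \eqref{Hgenel} after noting $\varepsilon = g(N,N) = \pm 1$ with sign $\lambda^j \varepsilon_3\varepsilon_4$ or similar.

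The main obstacle will be the bookkeeping: one must verify that all the sign products $\varepsilon_1\varepsilon_2,\varepsilon_3\varepsilon_4,\varepsilon_1\varepsilon_4$, etc.\ that appear in \eqref{frenet} really do collapse into the single combination $\varepsilon_3\varepsilon_4\lambda^j$ (for the normal) and $\varepsilon_2\lambda$ (multiplying the $k_1 f_j\sqrt{r'^2-\lambda\varepsilon_1}$ term) uniformly across $j\in\{1,2,3,4\}$ and $\lambda\in\{-1,+1\}$. The cleanest way to see this is to exploit the algebraic identities $\varepsilon_i^2=1$ and $\varepsilon_1\varepsilon_2\varepsilon_3\varepsilon_4=-1$ (exactly one $\varepsilon_i$ equals $-1$) and the two key functional identities $\sum_{i=2}^{4}\varepsilon_i\mathfrak{a}_i^2=\lambda$ and $\sum_{i=2}^{4}\varepsilon_i\mathfrak{a}_i\partial_s\mathfrak{a}_i=0$ — after these are applied, every expression collapses to the unified forms stated in \eqref{normal}--\eqref{aslitoplu}, with $f_j$ playing the role of the single scalar coefficient that remembers which pseudo-sphere or pseudo-hyperbolic space one is foliating by.
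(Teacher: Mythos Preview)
Your overall strategy is sound and in fact cleaner than the paper's: the paper does not work uniformly with the $\varepsilon_i$ and $\mathfrak{a}_i$, but instead writes out the eight cases $j\in\{1,2,3,4\}$, $\lambda\in\{+1,-1\}$ one by one, computing $g_{ij}^{\{j;\lambda\}}$, $h_{ij}^{\{j;\lambda\}}$ and $S_{ij}^{\{j;\lambda\}}$ explicitly for each, and only at the end recognises the common pattern. It also obtains $N$ from the triple cross product \eqref{4y} rather than from your radial-vector argument; your observation that $\mathfrak{C}-\beta$ is $g$-orthogonal to all three tangents (from \eqref{2} and \eqref{6}) is correct and gives the normal more directly.

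Two points in your sketch need repair, though neither is fatal. First, the identity you quote should read $\sum_{i=2}^{4}\varepsilon_i\mathfrak{a}_i^{2}=-\varepsilon_1$, not $=\lambda$ (check $j=1$: all $\varepsilon_i=1$ for $i\ge 2$ and the sum is $1$, independent of $\lambda$); also the $\mathfrak{a}_i$ do not depend on $s$, so the $\partial_s$-identity you mention is vacuous. Second, and more importantly, neither $[g_{ij}]$ nor the shape operator is block-diagonal in the $\{s\}$ versus $\{t,w\}$ split: in the paper's explicit formulas one has $g_{12},g_{13}\neq 0$ and $S_{21},S_{31}\neq 0$. What \emph{is} true is that $S_{12}=S_{13}=S_{23}=S_{32}=0$, so the shape operator is block \emph{lower-triangular}; the principal curvatures are therefore still the diagonal entries $S_{11},S_{22},S_{33}$ and your conclusion about reading them off without solving a cubic survives. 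You should amend the claimed structure accordingly, since the cancellation you describe for the off-diagonal $g_{1k}$ does not occur.
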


\begin{proof}
Firstly, with the aid of the first derivatives of (\ref{canaldenk}) according
to $s,$ $t$ and $w$, we obtain the normals of the canal hypersurfaces
$\mathfrak{C}^{\{j;\lambda\}}(s,t,w)$ from (\ref{4y}) as%
\begin{equation}
\left.
\begin{array}
[c]{l}%
{\small N}^{\{1;1\}}{\small =-r}^{\prime}{\small (s)F}_{1}{\small (s)-}%
\sqrt{{\small r}^{\prime}{\small (s)}^{2}{\small +1}}\left(  \cos
{\small t}\cos{\small wF}_{2}{\small +}\sin{\small t}\cos{\small wF}%
_{3}{\small +}\sin{\small wF}_{4}\right)  {\small ,}\\
{\small N}^{\{1;-1\}}{\small =-r}^{\prime}{\small (s)F}_{1}{\small (s)+}%
\sqrt{{\small r}^{\prime}{\small (s)}^{2}{\small -1}}\left(  \cos
{\small t}\cos{\small wF}_{2}{\small +}\sin{\small t}\cos{\small wF}%
_{3}{\small +}\sin{\small wF}_{4}\right)  {\small ,}\\
{\small N}^{\{2;1\}}{\small =r}^{\prime}{\small (s)F}_{1}{\small (s)-}%
\sqrt{{\small r}^{\prime}{\small (s)}^{2}{\small -1}}\left(  \cosh
{\small t}\cosh{\small wF}_{2}{\small +}\sinh{\small wF}_{3}{\small +}%
\sinh{\small t}\cosh{\small wF}_{4}\right)  {\small ,}\\
{\small N}^{\{2;-1\}}{\small =-r}^{\prime}{\small (s)F}_{1}{\small (s)+}%
\sqrt{{\small r}^{\prime}{\small (s)}^{2}{\small +1}}\left(  \cosh
{\small t}\cosh{\small wF}_{2}{\small +}\sinh{\small wF}_{3}{\small +}%
\sinh{\small t}\cosh{\small wF}_{4}\right)  {\small ,}\\
{\small N}^{\{3;1\}}{\small =-r}^{\prime}{\small (s)F}_{1}{\small (s)+}%
\sqrt{{\small r}^{\prime}{\small (s)}^{2}{\small -1}}\left(  \sinh
{\small t}\cosh{\small wF}_{2}{\small +}\cosh{\small t}\cosh{\small wF}%
_{3}{\small +}\sinh{\small wF}_{4}\right)  {\small ,}\\
{\small N}^{\{3;-1\}}{\small =}-{\small r}^{\prime}{\small (s)F}%
_{1}{\small (s)-}\sqrt{{\small r}^{\prime}{\small (s)}^{2}{\small +1}}\left(
\sinh{\small t}\cosh{\small wF}_{2}{\small +}\cosh{\small t}\cosh
{\small wF}_{3}{\small +}\sinh{\small wF}_{4}\right)  {\small ,}\\
{\small N}^{\{4;1\}}{\small =-r}^{\prime}{\small (s)F}_{1}{\small (s)+}%
\sqrt{{\small r}^{\prime}{\small (s)}^{2}{\small -1}}\left(  \sinh
{\small wF}_{2}{\small +}\sinh{\small t}\cosh{\small wF}_{3}{\small +}%
\cosh{\small t}\cosh{\small wF}_{4}\right)  {\small ,}\\
{\small N}^{\{4;-1\}}{\small =r}^{\prime}{\small (s)F}_{1}{\small (s)+}%
\sqrt{{\small r}^{\prime}{\small (s)}^{2}{\small +1}}\left(  \sinh
{\small wF}_{2}{\small +}\sinh{\small t}\cosh{\small wF}_{3}{\small +}%
\cosh{\small t}\cosh{\small wF}_{4}\right)
\end{array}
\right\}  \label{normaller}%
\end{equation}
and so, we can write (\ref{normal}).

Now, the coefficients of the first and second fundamental forms of the canal
hypersurfaces $\mathfrak{C}^{\{1;\lambda\}}$ which are formed as the envelope
of a family of pseudo hyperspheres or pseudo hyperbolic hyperspheres in
$E_{1}^{4}$ generated by the timelike center curve are
\begin{equation}
\left.
\begin{array}
[c]{l}%
{\small g}_{11}^{\{1;\lambda\}}{\small =}\frac{{\small r}^{\prime
2}+{\small \lambda}}{4}\left(  {\small r}^{2}\left(
\begin{array}
[c]{l}%
{\small 4k}_{2}^{2}\cos^{2}{\small w-4k}_{2}{\small k}_{3}\cos{\small t}%
\sin{\small 2w}\\
{\small -}\left(  {\small 2}\cos{\small 2t}\cos^{2}{\small w+}\cos
{\small 2w-3}\right)  {\small k}_{3}^{2}%
\end{array}
\right)  {\small -4\lambda}\right) \\
\text{ \ \ \ \ \ \ \ \ }{\small -\lambda r}^{2}{\small k}_{1}^{2}\left(
\cos^{2}{\small t}\cos^{2}{\small w+}\left(  {\small \lambda}\cos
^{2}{\small t}\cos^{2}{\small w-\lambda}\right)  {\small r}^{\prime2}\right)
{\small -2\lambda rr}^{\prime\prime}{\small -}\frac{{\small \lambda}%
r^{2}r^{\prime\prime2}}{\lambda+{\small r}^{\prime2}}\\
\text{ \ \ \ \ \ \ \ \ }{\small -}\frac{2\lambda k_{1}r\cos w}{\sqrt
{\lambda+r^{\prime2}}}\left(  \left(  {\small \lambda k}_{2}{\small rr}%
^{\prime}\cos{\small t}\sin{\small t}\right)  \left(  {\small r}^{\prime
2}+{\small \lambda}\right)  {\small +\lambda rr}^{\prime\prime}\cos
{\small t}\right)  ,\\
{\small g}_{12}^{\{1;\lambda\}}{\small =g}_{21}^{\{1;\lambda\}}{\small =r}%
^{2}\left(  {\small k}_{2}\left(  {\small r}^{\prime2}+{\small \lambda
}\right)  \cos{\small w-\lambda k}_{1}{\small r}^{\prime}\sqrt{{\small r}%
^{\prime2}+{\small \lambda}}\sin{\small t-k}_{3}\left(  {\small r}^{\prime
2}+{\small \lambda}\right)  \cos{\small t}\sin{\small w}\right)
\cos{\small w},\\
{\small g}_{13}^{\{1;\lambda\}}{\small =g}_{31}^{\{1;\lambda\}}{\small =r}%
^{2}\left(  {\small k}_{3}\left(  {\small r}^{\prime2}+{\small \lambda
}\right)  \sin{\small t-\lambda k}_{1}{\small r}^{\prime}\sqrt{{\small r}%
^{\prime2}+{\small \lambda}}\cos{\small t}\sin{\small w}\right)  ,\\
{\small g}_{22}^{\{1;\lambda\}}{\small =}\left(  {\small r}^{\prime
2}+{\small \lambda}\right)  {\small r}^{2}\cos^{2}{\small w,}\text{
\ }{\small g}_{23}^{\{1;\lambda\}}{\small =g}_{32}^{\{1;\lambda\}}%
{\small =0,}\text{ \ }{\small g}_{33}^{\{1;\lambda\}}{\small =}\left(
{\small r}^{\prime2}+{\small \lambda}\right)  {\small r}^{2};
\end{array}
\right\}  \label{g1l}%
\end{equation}%
\begin{equation}
\left.
\begin{array}
[c]{l}%
{\small h}_{11}^{\{1;\lambda\}}{\small =}\frac{{\small \lambda r}\left(
{\small r}^{\prime2}+{\small \lambda}\right)  }{4}\left(  {\small 4k}_{2}%
^{2}\cos^{2}{\small w-}\left(  \cos{\small (2t)+2}\cos^{2}{\small t}%
\cos{\small (2w)-3}\right)  {\small k}_{3}^{2}{\small -4k}_{2}{\small k}%
_{3}\cos{\small t}\sin{\small (2w)}\right) \\
\text{ \ \ \ \ \ \ \ }{\small -\lambda k}_{1}^{2}{\small r}\left(
{\small \lambda}\cos^{2}{\small t}\cos^{2}{\small w+}\left(  \cos
^{2}{\small t}\cos^{2}{\small w-1}\right)  {\small r}^{\prime2}\right)
{\small -r}^{\prime\prime}{\small -}\frac{rr^{\prime\prime2}}{\lambda
+r^{\prime2}}\\
\text{ \ \ \ \ \ \ \ }{\small -}\frac{\lambda k_{1}\cos w}{\sqrt
{\lambda+r^{\prime2}}}\left(  \left(  \cos{\small t+2\lambda k}_{2}%
{\small rr}^{\prime}\sin{\small t}\right)  ({\small r}^{\prime2}%
+{\small \lambda}){\small +2rr}^{\prime\prime}\cos{\small t}\right)  ,\\
{\small h}_{12}^{\{1;\lambda\}}{\small =h}_{21}^{\{1;\lambda\}}%
{\small =\lambda r}\left(  \cos{\small wk}_{2}\left(  {\small r}^{\prime
2}+{\small \lambda}\right)  {\small -\lambda k}_{1}\sin{\small tr}^{\prime
}\sqrt{{\small r}^{\prime2}+{\small \lambda}}{\small -}\cos{\small tk}_{3}%
\sin{\small w}\left(  {\small r}^{\prime2}+{\small \lambda}\right)  \right)
\cos{\small w},\\
{\small h}_{13}^{\{1;\lambda\}}{\small =h}_{31}^{\{1;\lambda\}}%
{\small =\lambda r}\left(  {\small k}_{3}\left(  {\small r}^{\prime
2}+{\small \lambda}\right)  \sin{\small t-\lambda k}_{1}{\small r}^{\prime
}\sqrt{{\small r}^{\prime2}+{\small \lambda}}\cos{\small t}\sin{\small w}%
\right)  ,\\
{\small h}_{22}^{\{1;\lambda\}}{\small =\lambda r}\left(  {\small r}^{\prime
2}+{\small \lambda}\right)  \cos^{2}{\small w},\text{ \ }{\small h}%
_{23}^{\{1;\lambda\}}{\small =h}_{32}^{\{1;\lambda\}}{\small =0,}\text{
\ }{\small h}_{33}^{\{1;\lambda\}}{\small =\lambda r}\left(  {\small r}%
^{\prime2}+{\small \lambda}\right)  ;
\end{array}
\right\}  \text{ \ } \label{h1l}%
\end{equation}
the coefficients of the first and second fundamental forms of the canal
hypersurfaces $\mathfrak{C}^{\{2;\lambda\}}$ which are formed as the envelope
of a family of pseudo hyperspheres or pseudo hyperbolic hyperspheres in
$E_{1}^{4}$ generated by the spacelike center curve with the timelike
principal normal vector are%
\begin{equation}
\left.
\begin{array}
[c]{l}%
{\small g}_{11}^{\{2;\lambda\}}{\small =}\frac{1}{4}\left(  {\small 4-4\lambda
r}^{\prime2}{\small +r}^{2}\left(  {\small r}^{\prime2}-{\small \lambda
}\right)  \left(
\begin{array}
[c]{l}%
\left(  \cosh{\small 2t+2}\cosh^{2}{\small t}\cosh{\small 2w-3}\right)
{\small k}_{3}^{2}\\
+\left(  \cosh{\small 2t+2}\cosh{\small 2w}\sinh^{2}{\small t+3}\right)
{\small k}_{2}^{2}\\
{\small -4k}_{2}{\small k}_{3}\cosh^{2}{\small w}\sinh{\small 2t}%
\end{array}
\right)  \right) \\
\text{ \ \ \ \ \ \ \ \ }{\small +r}^{2}{\small k}_{1}^{2}\left(  \left(
\cosh^{2}{\small t}\cosh^{2}{\small w-1}\right)  {\small r}^{\prime
2}{\small -\lambda}\cosh^{2}{\small t}\cosh^{2}{\small w}\right)
{\small -2\lambda rr}^{\prime\prime}{\small -}\frac{{\small \lambda}%
r^{2}r^{\prime\prime2}}{{\small r}^{\prime2}-\lambda}\\
\text{ \ \ \ \ \ \ \ \ }{\small +}\frac{2k_{1}r}{\sqrt{{\small r}^{\prime
2}-{\small \lambda}}}\left(  \left(  \cosh{\small t}\cosh{\small w+\lambda
k}_{2}{\small rr}^{\prime}\sinh{\small w}\right)  \left(  {\small r}^{\prime
2}-{\small \lambda}\right)  {\small +rr}^{\prime\prime}\cosh{\small t}%
\cosh{\small w}\right)  ,\\
{\small g}_{12}^{\{2;\lambda\}}{\small =g}_{21}^{\{2;\lambda\}}{\small =r}%
^{2}\left(
\begin{array}
[c]{l}%
{\small \lambda}\left(  {\small k}_{1}{\small r}^{\prime}\sqrt{{\small r}%
^{\prime2}-{\small \lambda}}{\small -k}_{2}{\small \lambda}\left(
{\small r}^{\prime2}-{\small \lambda}\right)  \sinh{\small w}\right)
\sinh{\small t}\\
+{\small k}_{3}\left(  {\small r}^{\prime2}-{\small \lambda}\right)
\cosh{\small t}\sinh{\small w}%
\end{array}
\right)  \cosh{\small w},\\
{\small g}_{13}^{\{2;\lambda\}}{\small =g}_{31}^{\{2;\lambda\}}{\small =r}%
^{2}\left(  {\small \lambda k}_{1}{\small r}^{\prime}\sqrt{{\small r}%
^{\prime2}-{\small \lambda}}\cosh{\small t}\sinh{\small w+k}_{2}\left(
{\small r}^{\prime2}-{\small \lambda}\right)  \cosh{\small t+k}_{3}\left(
{\small r}^{\prime2}-{\small \lambda}\right)  \sinh{\small t}\right)  ,\\
{\small g}_{22}^{\{2;\lambda\}}{\small =}\left(  {\small r}^{\prime
2}-{\small \lambda}\right)  {\small r}^{2}\cosh^{2}{\small w,}\text{
\ }{\small g}_{23}^{\{2;\lambda\}}{\small =g}_{32}^{\{2;\lambda\}}%
{\small =0,}\text{ \ }{\small g}_{33}^{\{2;\lambda\}}{\small =r}^{2}\left(
{\small r}^{\prime2}-{\small \lambda}\right)  ;
\end{array}
\right\}  \label{g2l}%
\end{equation}%
\begin{equation}
\left.
\begin{array}
[c]{l}%
{\small h}_{11}^{\{2;\lambda\}}{\small =}\frac{1}{4}\left(
\begin{array}
[c]{l}%
{\small r}\left(  {\small r}^{\prime2}-{\small \lambda}\right)  \left(
\begin{array}
[c]{l}%
\left(  \cosh{\small (2t)+2}\cosh^{2}{\small t}\cosh{\small (2w)-3}\right)
{\small k}_{3}^{2}\\
{\small -4k}_{2}{\small k}_{3}\cosh^{2}{\small w}\sinh{\small (2t)}\\
{\small +k}_{2}^{2}\left(  {\small 3+}\cosh{\small (2t)+2}\sinh^{2}%
{\small t}\cosh{\small (2w)}\right)
\end{array}
\right)  {\small -4\lambda r}^{\prime\prime}\\
{\small +k}_{1}^{2}{\small r}\left(  {\small r}^{\prime2}\left(
\cosh{\small (2t)+2}\cosh^{2}{\small t}\cosh{\small (2w)-3}\right)
{\small -4\lambda}\cosh^{2}{\small t}\cosh^{2}{\small w}\right) \\
{\small +}\frac{4k_{1}}{\sqrt{{\small r}^{\prime2}-{\small \lambda}}}\left(
\begin{array}
[c]{l}%
\left(  \cosh{\small t}\cosh{\small w+2\lambda k}_{2}{\small rr}^{\prime}%
\sinh{\small w}\right)  \left(  {\small r}^{\prime2}-{\small \lambda}\right)
\\
{\small +2rr}^{\prime\prime}\cosh{\small t}\cosh{\small w}%
\end{array}
\right)  {\small -}\frac{4\lambda rr^{\prime\prime2}}{{\small r}^{\prime
2}-{\small \lambda}}%
\end{array}
\right)  ,\\
{\small h}_{12}^{\{2;\lambda\}}{\small =h}_{21}^{\{2;\lambda\}}{\small =r}%
\left(  {\small \lambda k}_{1}{\small r}^{\prime}\sqrt{{\small r}^{\prime
2}-{\small \lambda}}\sinh{\small t+}\left(  {\small r}^{\prime2}%
-{\small \lambda}\right)  \left(  {\small k}_{3}\cosh{\small t-k}_{2}%
\sinh{\small t}\right)  \sinh{\small w}\right)  \cosh{\small w},\\
{\small h}_{13}^{\{2;\lambda\}}{\small =h}_{31}^{\{2;\lambda\}}{\small =r}%
\left(  {\small \lambda k}_{1}{\small r}^{\prime}\sqrt{{\small r}^{\prime
2}-{\small \lambda}}\cosh{\small t}\sinh{\small w+}\left(  {\small r}%
^{\prime2}-{\small \lambda}\right)  \left(  {\small k}_{2}\cosh{\small t-k}%
_{3}\sinh{\small t}\right)  \right)  ,\\
{\small h}_{22}^{\{2;\lambda\}}{\small =r}\left(  {\small r}^{\prime
2}-{\small \lambda}\right)  \cosh^{2}{\small w},\text{ \ }{\small h}%
_{23}^{\{2;\lambda\}}{\small =h}_{32}^{\{2;\lambda\}}{\small =0,}\text{
\ }{\small h}_{33}^{\{2;\lambda\}}{\small =r}\left(  {\small r}^{\prime
2}-{\small \lambda}\right)  ;
\end{array}
\right\}  \text{ \ \ } \label{h2l}%
\end{equation}
the coefficients of the first and second fundamental forms of the canal
hypersurfaces $\mathfrak{C}^{\{3;\lambda\}}$ which are formed as the envelope
of a family of pseudo hyperspheres or pseudo hyperbolic hyperspheres in
$E_{1}^{4}$ generated by the spacelike center curve with the timelike binormal
vector are%
\begin{equation}
\left.
\begin{array}
[c]{l}%
{\small g}_{11}^{\{3;\lambda\}}{\small =}\frac{{\small r}^{\prime
2}-{\small \lambda}}{4}\left(  {\small r}^{2}\left(
\begin{array}
[c]{l}%
{\small k}_{3}^{2}\left(  {\small 3+}\cosh{\small 2t+2}\cosh{\small 2w}%
\sinh^{2}{\small t}\right) \\
+{\small 4k}_{2}^{2}\cosh^{2}{\small w-4k}_{2}{\small k}_{3}\sinh
{\small t}\sinh{\small 2w}%
\end{array}
\right)  {\small -4\lambda}\right) \\
\text{ \ \ \ \ \ \ \ \ }{\small +r}^{2}{\small k}_{1}^{2}\left(
{\small r}^{\prime2}\left(  {\small 1+}\sinh^{2}{\small t}\cosh^{2}%
{\small w}\right)  {\small -\lambda}\sinh^{2}{\small t}\cosh^{2}%
{\small w}\right)  {\small -2\lambda rr}^{\prime\prime}{\small -}%
\frac{{\small \lambda}r^{2}r^{\prime\prime2}}{{\small r}^{\prime
2}-{\small \lambda}}\\
\text{ \ \ \ \ \ \ \ \ }{\small -}\frac{{\small 2k}_{1}{\small r}%
\cosh{\small w}}{\sqrt{{\small r}^{\prime2}-{\small \lambda}}}\left(  \left(
{\small r}^{\prime2}-{\small \lambda}\right)  \left(  {\small \lambda k}%
_{2}{\small rr}^{\prime}\cosh{\small t}\sinh{\small t}\right)  +{\small rr}%
^{\prime\prime}\sinh{\small t}\right)  ,\\
{\small g}_{12}^{\{3;\lambda\}}{\small =g}_{21}^{\{3;\lambda\}}{\small =r}%
^{2}\left(
\begin{array}
[c]{l}%
{\small k}_{2}\left(  {\small r}^{\prime2}-{\small \lambda}\right)
\cosh{\small w-\lambda k}_{1}{\small r}^{\prime}\sqrt{{\small r}^{\prime
2}-{\small \lambda}}\cosh{\small t}\\
{\small -k}_{3}\left(  {\small r}^{\prime2}{\small -\lambda}\right)
\sinh{\small t}\sinh{\small w}%
\end{array}
\right)  \cosh^{2}{\small w},\\
{\small g}_{13}^{\{3;\lambda\}}{\small =g}_{31}^{\{3;\lambda\}}{\small =r}%
^{2}\left(  {\small k}_{3}\left(  {\small r}^{\prime2}-{\small \lambda
}\right)  \cosh{\small t-\lambda k}_{1}{\small r}^{\prime}\sqrt{{\small r}%
^{\prime2}-{\small \lambda}}\sinh{\small t}\sinh{\small w}\right)  ,\\
{\small g}_{22}^{\{3;\lambda\}}{\small =}\left(  {\small r}^{\prime
2}-{\small \lambda}\right)  {\small r}^{2}\cosh^{2}{\small w,}\text{
\ }{\small g}_{23}^{\{3;\lambda\}}{\small =g}_{32}^{\{3;\lambda\}}%
{\small =0,}\text{ \ }{\small g}_{33}^{\{3;\lambda\}}{\small =}\left(
{\small r}^{\prime2}-{\small \lambda}\right)  {\small r}^{2};
\end{array}
\right\}  \text{ \ \ \ \ \ \ \ \ \ \ } \label{g3l}%
\end{equation}%
\begin{equation}
\left.
\begin{array}
[c]{l}%
{\small h}_{11}^{\{3;\lambda\}}{\small =2k}_{1}{\small k}_{2}{\small rr}%
^{\prime}\sqrt{{\small r}^{\prime2}-{\small \lambda}}\cosh{\small t}%
\cosh{\small w-\lambda k}_{2}^{2}{\small r}\left(  {\small r}^{\prime
2}-{\small \lambda}\right)  \cosh^{2}{\small w}\\
\text{ \ \ \ \ \ \ \ \ }{\small -}\frac{\lambda k_{3}^{2}r\left(
{\small r}^{\prime2}-{\small \lambda}\right)  }{4}\left(  {\small 3+}%
\cosh{\small (2t)+2}\sinh^{2}{\small t}\cosh{\small (2w)}\right)
{\small +\lambda k}_{2}{\small k}_{3}{\small r}\left(  {\small r}^{\prime
2}-{\small \lambda}\right)  \sinh{\small t}\sinh{\small (2w)}\\
\text{ \ \ \ \ \ \ \ \ }{\small +}\frac{k_{1}^{2}r}{4}\left(  {\small 4}%
\cosh^{2}{\small w}\sinh^{2}{\small t-\lambda r}^{\prime2}\left(
{\small 3+}\cosh{\small (2t)+2}\sinh^{2}{\small t}\cosh{\small (2w)}\right)
\right)  {\small +r}^{\prime\prime}{\small +}\frac{rr^{\prime\prime2}%
}{-\lambda+r^{\prime2}}\\
\text{ \ \ \ \ \ \ \ \ }{\small +}\frac{\lambda}{\sqrt{{\small r}^{\prime
2}-{\small \lambda}}}\left(  {\small k}_{1}\left(  {\small r}^{\prime
2}-{\small \lambda}{\small +2rr}^{\prime\prime}\right)  \cosh{\small w}%
\sinh{\small t}\right)  ,\\
{\small h}_{12}^{\{3;\lambda\}}{\small =h}_{21}^{\{3;\lambda\}}{\small =r}%
\left(  {\small k}_{1}{\small r}^{\prime}\sqrt{{\small r}^{\prime
2}-{\small \lambda}}\cosh{\small t-\lambda}\left(  {\small r}^{\prime
2}-{\small \lambda}\right)  \left(  {\small k}_{2}\cosh{\small w}%
+{\small k}_{3}\sinh{\small t}\sinh{\small w}\right)  \right)  \cosh
{\small w},\\
{\small h}_{13}^{\{3;\lambda\}}{\small =h}_{31}^{\{3;\lambda\}}{\small =r}%
\left(  {\small k}_{1}{\small r}^{\prime}\sqrt{{\small r}^{\prime
2}-{\small \lambda}}\sinh{\small t}\sinh{\small w-\lambda k}_{3}\left(
{\small r}^{\prime2}-{\small \lambda}\right)  \cosh{\small t}\right)  ,\\
{\small h}_{22}^{\{3;\lambda\}}{\small =-\lambda r}\left(  {\small r}%
^{\prime2}-{\small \lambda}\right)  \cosh^{2}{\small w},\text{ \ }%
{\small h}_{23}^{\{3;\lambda\}}{\small =h}_{32}^{\{3;\lambda\}}{\small =0,}%
\text{ \ }{\small h}_{33}^{\{3;\lambda\}}{\small =-\lambda r}\left(
{\small r}^{\prime2}-{\small \lambda}\right)
\end{array}
\right\}  \label{h3l}%
\end{equation}
and the coefficients of the first and second fundamental forms of the canal
hypersurfaces $\mathfrak{C}^{\{4;\lambda\}}$ which are formed as the envelope
of a family of pseudo hyperspheres or pseudo hyperbolic hyperspheres in
$E_{1}^{4}$ generated by the spacelike center curve with the timelike
trinormal vector are%
\begin{equation}
\left.
\begin{array}
[c]{l}%
{\small g}_{11}^{\{4;\lambda\}}{\small =}\frac{-\lambda+{\small r}^{\prime2}%
}{4}\left(  {\small r}^{2}\left(
\begin{array}
[c]{l}%
{\small k}_{2}^{2}\left(  {\small 2}\cosh{\small 2t}\cosh^{2}{\small w+}%
\cosh{\small 2w-3}\right) \\
{\small +4k}_{3}^{2}\cosh^{2}{\small w+4k}_{2}{\small k}_{3}\cosh
{\small t}\sinh{\small 2w}%
\end{array}
\right)  {\small -4\lambda}\right) \\
\text{ \ \ \ \ \ \ \ }{\small +r}^{2}{\small k}_{1}^{2}\left(  {\small r}%
^{\prime2}\cosh^{2}{\small w-\lambda}\sinh^{2}{\small w}\right)
{\small -2\lambda rr}^{\prime\prime}{\small -}\frac{{\small \lambda}%
r^{2}r^{\prime\prime2}}{-\lambda+{\small r}^{\prime2}}\\
\text{ \ \ \ \ \ \ \ }{\small +}\frac{2\lambda k_{1}r}{\sqrt{{\small r}%
^{\prime2}-{\small \lambda}}}\left(  {\small k}_{2}{\small rr}^{\prime}\left(
{\small r}^{\prime2}-{\small \lambda}\right)  \cosh{\small w}\sinh
{\small t-\lambda}\left(  {\small r}^{\prime2}-{\small \lambda}{\small +rr}%
^{\prime\prime}\right)  \sinh{\small w}\right)  ,\\
{\small g}_{12}^{\{4;\lambda\}}{\small =g}_{21}^{\{4;\lambda\}}{\small =r}%
^{2}\left(  {\small r}^{\prime2}-{\small \lambda}\right)  \left(
{\small k}_{2}\cosh{\small t}\sinh{\small w+k}_{3}\cosh{\small w}\right)
\cosh{\small w},\\
{\small g}_{13}^{\{4;\lambda\}}{\small =g}_{31}^{\{4;\lambda\}}{\small =-r}%
^{2}\left(  {\small \lambda k}_{1}{\small r}^{\prime}\sqrt{{\small r}%
^{\prime2}-{\small \lambda}}\cosh{\small w+k}_{2}\left(  {\small r}^{\prime
2}-{\small \lambda}\right)  \sinh{\small t}\right)  ,\\
{\small g}_{22}^{\{4;\lambda\}}{\small =r}^{2}\left(  {\small r}^{\prime
2}-{\small \lambda}\right)  \cosh^{2}{\small w},\text{ \ }{\small g}%
_{23}^{\{4;\lambda\}}{\small =g}_{32}^{\{4;\lambda\}}{\small =0,}\text{
\ }{\small g}_{33}^{\{4;\lambda\}}{\small =r}^{2}\left(  {\small r}^{\prime
2}-{\small \lambda}\right)
\end{array}
\right\}  \text{
\ \ \ \ \ \ \ \ \ \ \ \ \ \ \ \ \ \ \ \ \ \ \ \ \ \ \ \ \ \ \ \ } \label{g4l}%
\end{equation}%
\begin{equation}
\left.
\begin{array}
[c]{l}%
{\small h}_{11}^{\{4;\lambda\}}{\small =}\frac{-{\small r}\left(
{\small r}^{\prime2}-{\small \lambda}\right)  }{4}\left(  {\small k}_{2}%
^{2}\left(  \cosh{\small (2t)+2}\cosh^{2}{\small t}\cosh{\small (2w)-3}%
\right)  {\small +4k}_{3}(k_{3}\cosh^{2}{\small w+k}_{2}\cosh{\small t}%
\sinh{\small (2w))}\right) \\
\text{ \ \ \ \ \ \ \ \ }{\small +\lambda k}_{1}^{2}{\small r}\left(  \sinh
^{2}{\small w-\lambda r}^{\prime2}\cosh^{2}{\small w}\right)  {\small +\lambda
r}^{\prime\prime}{\small +}\frac{\lambda rr^{\prime\prime2}}{{\small r}%
^{\prime2}-{\small \lambda}}\\
\text{ \ \ \ \ \ \ \ \ }{\small +}\frac{k_{1}}{\sqrt{{\small r}^{\prime
2}-{\small \lambda}}}\left(  \left(  \sinh{\small w-2\lambda k}_{2}%
{\small rr}^{\prime}\sinh{\small t\cosh w}\right)  \left(  {\small r}^{\prime
2}-{\small \lambda}\right)  {\small +2rr}^{\prime\prime}\sinh{\small w}%
\right)  ,\\
{\small h}_{12}^{\{4;\lambda\}}{\small =h}_{21}^{\{4;\lambda\}}{\small =-r}%
\left(  {\small k}_{3}\cosh{\small w+k}_{2}\cosh{\small t}\sinh{\small w}%
\right)  \left(  {\small r}^{\prime2}-{\small \lambda}\right)  \cosh
{\small w},\\
{\small h}_{13}^{\{4;\lambda\}}{\small =h}_{31}^{\{4;\lambda\}}{\small =r}%
\left(  {\small \lambda k}_{1}{\small r}^{\prime}\sqrt{{\small r}^{\prime
2}-{\small \lambda}}\cosh{\small w+k}_{2}\left(  {\small r}^{\prime
2}-{\small \lambda}\right)  \sinh{\small t}\right)  ,\\
{\small h}_{22}^{\{4;\lambda\}}{\small =-r}\left(  {\small r}^{\prime
2}-{\small \lambda}\right)  \cosh^{2}{\small w},\text{ \ }{\small h}%
_{23}^{\{4;\lambda\}}{\small =h}_{32}^{\{4;\lambda\}}{\small =0,}\text{
\ }{\small h}_{33}^{\{4;\lambda\}}{\small =-r}\left(  {\small r}^{\prime
2}-{\small \lambda}\right)
\end{array}
\right\}  \label{h4l}%
\end{equation}
and these imply that
\begin{equation}
\det{\small g}_{ij}^{\{j;\lambda\}}{\small =-\lambda A}^{2}{\small r}%
^{4}\left(  r^{\prime}{}^{2}-\lambda\varepsilon_{1}\right)  \left(  r^{\prime
}{}^{2}-\lambda\varepsilon_{1}{\small +\varepsilon}_{2}{\small \lambda k}%
_{1}{\small f}_{j}{\small r}\sqrt{r^{\prime}{}^{2}-\lambda\varepsilon_{1}%
}{\small +rr}^{\prime\prime}\right)  ^{2},\text{
\ \ \ \ \ \ \ \ \ \ \ \ \ \ \ \ \ \ \ } \label{detgtoplu}%
\end{equation}%
\begin{equation}
\det{\small h}_{ij}^{\{j;\lambda\}}{\small =-\lambda A}^{2}{\small r}%
^{2}\left(  r^{\prime}{}^{2}-\lambda\varepsilon_{1}\right)  \left(
\begin{array}
[c]{l}%
{\small \varepsilon}_{3}{\small \varepsilon}_{4}{\small \lambda}^{j}%
{\small f}_{j}^{2}{\small k}_{1}^{2}{\small r}\left(  r^{\prime}{}^{2}%
-\lambda\varepsilon_{1}\right)  {\small +\varepsilon}_{3}{\small \varepsilon
}_{4}{\small \lambda}^{j}{\small r}^{\prime\prime}\left(  r^{\prime}{}%
^{2}-\lambda\varepsilon_{1}{\small +rr}^{\prime\prime}\right) \\
{\small +\varepsilon}_{2}{\small \varepsilon}_{3}{\small \varepsilon}%
_{4}{\small \lambda}^{j+1}{\small f}_{j}{\small k}_{1}\sqrt{r^{\prime}{}%
^{2}-\lambda\varepsilon_{1}}\left(  r^{\prime}{}^{2}-\lambda\varepsilon
_{1}{\small +2rr}^{\prime\prime}\right)
\end{array}
\right)  , \label{dethtoplu}%
\end{equation}
where $A=\cos w,$ for $j=1$ and $A=\cosh w,$ for $j=2,3,4.$ Thus, from
(\ref{yy4}), (\ref{detgtoplu}) and (\ref{dethtoplu}), the Gaussian curvatures
are obtained as (\ref{Kgenel}).

Here, obtaining the inverse of the first fundamental forms and using these and
second fundamental forms in (\ref{7yyy}), we obtain the components of the
shape operators of canal hypersurfaces $\mathfrak{C}^{\{j;\lambda\}}$ as%
\begin{equation}
\left.
\begin{array}
[c]{l}%
{\small S}_{11}^{\{1;\lambda\}}{\small =}\frac{{\small \lambda k}_{1}%
^{2}{\small r}\left(  {\small r}^{\prime2}+{\small \lambda}\right)  \cos
^{2}{\small t}\cos^{2}{\small w+\lambda r}^{\prime\prime}\left(  r^{\prime
2}+\lambda+rr^{\prime\prime}\right)  {\small k}_{1}\sqrt{{\small r}^{\prime
2}+{\small \lambda}}\left(  {\small r}^{\prime2}+{\small \lambda}%
{\small +2rr}^{\prime\prime}\right)  \cos{\small t}\cos{\small w}}{\left(
{\small r}^{\prime2}+{\small \lambda}{\small +r}\left(  {\small \lambda k}%
_{1}\sqrt{{\small r}^{\prime2}+{\small \lambda}}\cos t\cos w{\small +r}%
^{\prime\prime}\right)  \right)  ^{2}},\\
{\small S}_{21}^{\{1;\lambda\}}{\small =}\frac{{\small -\lambda}%
\sqrt{{\small r}^{\prime2}+{\small \lambda}}\left(  {\small k}_{1}%
{\small r}^{\prime}\sin{\small t}\sec{\small w-\lambda k}_{2}\sqrt
{{\small r}^{\prime2}+{\small \lambda}}{\small +\lambda k}_{3}\sqrt
{{\small r}^{\prime2}+{\small \lambda}}\cos{\small t}\tan{\small w}\right)
\sec{\small w}}{{\small r}\left(  {\small k}_{1}{\small r}\sqrt{{\small r}%
^{\prime2}+{\small \lambda}}\cos{\small t+\lambda}\left(  {\small r}^{\prime
2}+{\small \lambda}{\small +rr}^{\prime\prime}\right)  \sec{\small w}\right)
},\\
{\small S}_{31}^{\{1;\lambda\}}{\small =}\frac{{\small \lambda}\left(
{\small \lambda k}_{3}\left(  {\small r}^{\prime2}+{\small \lambda}\right)
\sin{\small t}-{\small k}_{1}{\small r}^{\prime}\sqrt{{\small r}^{\prime
2}+{\small \lambda}}\cos{\small t}\sin{\small w}\right)  }{{\small r}\left(
{\small 1+\lambda r}^{\prime2}{\small +r}\left(  {\small k}_{1}\sqrt
{\lambda+r^{\prime2}}\cos{\small t}\cos{\small w+\lambda r}^{\prime\prime
}\right)  \right)  },\\
{\small S}_{22}^{\{1;\lambda\}}{\small =S}_{33}^{\{1;\lambda\}}{\small =}%
\frac{\lambda}{r},\\
{\small S}_{12}^{\{1;\lambda\}}{\small =S}_{13}^{\{1;\lambda\}}{\small =S}%
_{23}^{\{1;\lambda\}}{\small =S}_{32}^{\{1;\lambda\}}{\small =0;}%
\end{array}
\right\}  \text{ \ \ \ \ \ \ \ \ \ } \label{S1l}%
\end{equation}

\begin{equation}
\left.
\begin{array}
[c]{l}%
{\small S}_{11}^{\{2;\lambda\}}{\small =}\frac{{\small k}_{1}^{2}%
{\small r}\left(  {\small r}^{\prime2}-{\small \lambda}\right)  \cosh
^{2}{\small t}\cosh^{2}{\small w+r}^{\prime\prime}\left(  {\small r}^{\prime
2}-{\small \lambda}{\small +rr}^{\prime\prime}\right)  {\small -\lambda k}%
_{1}\sqrt{{\small r}^{\prime2}-{\small \lambda}}\left(  {\small r}^{\prime
2}-{\small \lambda}{\small +2rr}^{\prime\prime}\right)  \cosh{\small t}%
\cosh{\small w}}{\left(  {\small r}^{\prime2}-{\small \lambda}{\small +r}%
\left(  {\small -\lambda k}_{1}\sqrt{{\small r}^{\prime2}-{\small \lambda}%
}\cosh{\small t}\cosh{\small w+r}^{\prime\prime}\right)  \right)  ^{2}},\\
{\small S}_{21}^{\{2;\lambda\}}{\small =}\frac{\left(  {\small \lambda k}%
_{1}{\small r}^{\prime}\sqrt{{\small r}^{\prime2}-{\small \lambda}}%
\sinh{\small t}\sec h{\small w+}\left(  {\small k}_{3}\cosh{\small t-k}%
_{2}\sinh{\small t}\right)  \sqrt{{\small r}^{\prime2}-{\small \lambda}}%
\tan{\small w}\right)  \sec{\small hw}}{{\small r}\left(  \left(
{\small r}^{\prime2}-{\small \lambda}{\small +rr}^{\prime\prime}\right)
\sec{\small hw-\lambda k}_{1}{\small r}\sqrt{{\small r}^{\prime2}%
-{\small \lambda}}\cosh{\small t}\right)  },\\
{\small S}_{31}^{\{2;\lambda\}}{\small =}\frac{{\small -k}_{1}{\small r}%
^{\prime}\sqrt{{\small r}^{\prime2}-{\small \lambda}}\cosh{\small t}%
\sinh{\small w-\lambda k}_{2}\left(  {\small r}^{\prime2}-{\small \lambda
}\right)  \cosh{\small t+\lambda k}_{3}\left(  {\small r}^{\prime
2}-{\small \lambda}\right)  \sinh{\small t}}{{\small r}\left(
{\small 1-\lambda r}^{\prime2}{\small +r}\left(  {\small k}_{1}\sqrt
{{\small r}^{\prime2}-{\small \lambda}}\cosh{\small t}\cosh{\small w-\lambda
r}^{\prime\prime}\right)  \right)  },\\
{\small S}_{22}^{\{2;\lambda\}}{\small =S}_{33}^{\{2;\lambda\}}{\small =}%
\frac{1}{r},\\
{\small S}_{12}^{\{2;\lambda\}}{\small =S}_{13}^{\{2;\lambda\}}{\small =S}%
_{23}^{\{2;\lambda\}}{\small =S}_{32}^{\{2;\lambda\}}{\small =0;}%
\end{array}
\right\}  \text{ \ \ \ \ } \label{S2l}%
\end{equation}

\begin{equation}
\left.
\begin{array}
[c]{l}%
{\small S}_{11}^{\{3;\lambda\}}{\small =-\lambda}\frac{{\small k}_{1}%
^{2}{\small r}\left(  {\small r}^{\prime2}-{\small \lambda}\right)  \sinh
^{2}{\small t}\cosh^{2}{\small w+r}^{\prime\prime}\left(  {\small r}^{\prime
2}-{\small \lambda}{\small +rr}^{\prime\prime}\right)  {\small +\lambda k}%
_{1}\sqrt{{\small r}^{\prime2}-{\small \lambda}}\left(  {\small r}^{\prime
2}-{\small \lambda}{\small +2rr}^{\prime\prime}\right)  \sinh{\small t}%
\cosh{\small w}}{\left(  {\small r}^{\prime2}-{\small \lambda}{\small +r}%
\left(  {\small \lambda k}_{1}\sqrt{{\small r}^{\prime2}-{\small \lambda}%
}\sinh{\small t}\cosh{\small w+r}^{\prime\prime}\right)  \right)  ^{2}%
}{\small ,}\\
{\small S}_{21}^{\{3;\lambda\}}{\small =}\frac{\sec{\small hw}\left(
{\small k}_{1}{\small r}^{\prime}\sqrt{{\small r}^{\prime2}-{\small \lambda}%
}\cosh{\small t}\sec{\small hw-\lambda}\left(  {\small k}_{2}{\small -k}%
_{3}\sinh{\small t}\tanh{\small w}\right)  \left(  {\small r}^{\prime
2}-{\small \lambda}\right)  \right)  }{r\left(  \lambda k_{1}r\sqrt
{{\small r}^{\prime2}-{\small \lambda}}\sinh t+\left(  -\lambda+r^{\prime
2}+rr^{\prime\prime}\right)  \sec hw\right)  }{\small ,}\\
{\small S}_{31}^{\{3;\lambda\}}{\small =}\frac{-k_{1}r^{\prime}\sqrt
{{\small r}^{\prime2}-{\small \lambda}}\sinh t\sinh w-\lambda k_{3}\left(
{\small r}^{\prime2}-{\small \lambda}\right)  \cosh t}{r\left(  r^{\prime
2}-\lambda+r\left(  \lambda k_{1}\sqrt{-\lambda+r^{\prime2}}\sinh t\cosh
w+r^{\prime\prime}\right)  \right)  }{\small ,}\\
{\small S}_{22}^{\{3;\lambda\}}{\small =S}_{33}^{\{3;\lambda\}}{\small =-}%
\frac{\lambda}{r},\\
{\small S}_{12}^{\{3;\lambda\}}{\small =S}_{13}^{\{3;\lambda\}}{\small =S}%
_{23}^{\{3;\lambda\}}{\small =S}_{32}^{\{3;\lambda\}}{\small =0;}%
\end{array}
\right\}  \label{S3l}%
\end{equation}

\begin{equation}
\left.
\begin{array}
[c]{l}%
{\small S}_{11}^{\{4;\lambda\}}{\small =}\frac{-k_{1}^{2}r\left(
{\small r}^{\prime2}-{\small \lambda}\right)  \sinh^{2}w-r^{\prime\prime
}\left(  r^{\prime2}-\lambda+rr^{\prime\prime}\right)  -\lambda k_{1}%
\sqrt{{\small r}^{\prime2}-{\small \lambda}}\left(  r^{\prime2}-\lambda
+2rr^{\prime\prime}\right)  \sinh w}{\left(  r^{\prime2}-\lambda+r\left(
\lambda k_{1}\sqrt{-\lambda+r^{\prime2}}\sinh w+r^{\prime\prime}\right)
\right)  ^{2}},\\
{\small S}_{21}^{\{4;\lambda\}}{\small =}-{\small \lambda}\frac{\left(
k_{3}+k_{2}\tanh w\cosh t\right)  \left(  {\small r}^{\prime2}-{\small \lambda
}\right)  }{r\left(  \lambda r^{\prime2}-1+r\left(  k_{1}\sqrt{{\small r}%
^{\prime2}-{\small \lambda}}\sinh w+\lambda r^{\prime\prime}\right)  \right)
},\\
{\small S}_{31}^{\{4;\lambda\}}{\small =}\frac{-k_{1}r^{\prime}\sqrt
{{\small r}^{\prime2}-{\small \lambda}}\cosh w+\lambda k_{2}\left(
{\small r}^{\prime2}-{\small \lambda}\right)  \sinh t}{r\left(  \lambda
r^{\prime2}-1+r\left(  k_{1}\sqrt{{\small r}^{\prime2}-{\small \lambda}}\sinh
w+\lambda r^{\prime\prime}\right)  \right)  },\\
{\small S}_{22}^{\{4;\lambda\}}{\small =S}_{33}^{\{4;\lambda\}}{\small =-}%
\frac{1}{r},\\
{\small S}_{12}^{\{4;\lambda\}}{\small =S}_{13}^{\{4;\lambda\}}{\small =S}%
_{23}^{\{4;\lambda\}}{\small =S}_{32}^{\{4;\lambda\}}{\small =0.}%
\end{array}
\right\}  \text{ \ \ \ \ \ \ \ \ \ \ \ \ \ \ } \label{S4l}%
\end{equation}
From (\ref{yy5}) and (\ref{S1l})-(\ref{S4l}), we obtain the mean curvatures of
the canal hypersurfaces $\mathfrak{C}^{\{j;\lambda\}}$ as (\ref{Hgenel}).

Finally, from (\ref{S1l})-(\ref{S4l}), we get%
\begin{equation}
\det(S-\mu I)=\left(  \mu-\frac{\varepsilon_{3}\varepsilon_{4}\lambda^{^{j}}%
}{r}\right)  ^{2}\left(  \frac{\left(
\begin{array}
[c]{l}%
{\small \varepsilon}_{3}{\small \varepsilon}_{4}{\small \lambda}^{^{j}}\left(
{\small rk}_{1}^{2}{\small f}_{j}^{2}\left(  r^{\prime}{}^{2}-\lambda
\varepsilon_{1}\right)  {\small +r}^{\prime\prime}\left(  r^{\prime}{}%
^{2}-\lambda\varepsilon_{1}+rr^{\prime\prime}\right)  \right) \\
{\small +\varepsilon}_{2}{\small \varepsilon}_{3}{\small \varepsilon}%
_{4}{\small \lambda}^{^{j+1}}{\small k}_{1}{\small f}_{j}\sqrt{r^{\prime}%
{}^{2}-\lambda\varepsilon_{1}}\left(  r^{\prime}{}^{2}-\lambda\varepsilon
_{1}{\small +2rr}^{\prime\prime}\right)
\end{array}
\right)  }{\left(  r^{\prime}{}^{2}-\lambda\varepsilon_{1}{\small +\varepsilon
}_{2}{\small \lambda rk}_{1}{\small f}_{j}\sqrt{r^{\prime}{}^{2}%
-\lambda\varepsilon_{1}}{\small +rr}^{\prime\prime}\right)  ^{2}}-\mu\right)
\label{det(S-I)}%
\end{equation}
and solving the equation of $\det(S-\mu I)=0$ we have (\ref{aslitoplu}). So,
the proof is completed.
\end{proof}

Here, from (\ref{normaller}) or (\ref{detgtoplu}), we can state the following proposition:

\begin{proposition}
The canal hypersurfaces $\mathfrak{C}^{\{j;\lambda\}}(s,t,w)$ in $E_{1}^{4}$
are spacelike or timelike if $\lambda=-1$ or $\lambda=1$, respectively.
\end{proposition}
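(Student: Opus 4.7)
The plan is to determine the causal character of $\mathfrak{C}^{\{j;\lambda\}}$ by computing $\varepsilon := \langle N^{\{j;\lambda\}}, N^{\{j;\lambda\}} \rangle$. Recall that a non-null hypersurface in $E_{1}^{4}$ is timelike precisely when its unit normal is spacelike ($\varepsilon = +1$), and spacelike precisely when its normal is timelike ($\varepsilon = -1$). Hence the proposition reduces to verifying the identity $\varepsilon = \lambda$.

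Using formula (\ref{normal}) together with the orthogonality relations $\langle F_i, F_k \rangle = \varepsilon_i \delta_{ik}$, I would first expand
\[
\langle N^{\{j;\lambda\}}, N^{\{j;\lambda\}} \rangle = \varepsilon_1 \lambda^2 r^{\prime 2} + (r^{\prime 2} - \lambda \varepsilon_1) \sum_{i=2}^{4} \varepsilon_i \mathfrak{a}_i^{2},
\]
where the outer scalar $(\varepsilon_3 \varepsilon_4 \lambda^j)^2$ collapses to $1$ since every factor lies in $\{\pm 1\}$.

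Next I would evaluate $S_j := \sum_{i=2}^{4} \varepsilon_i \mathfrak{a}_i^{2}$ by splitting on the value of $j$. For $j=1$ (so $\varepsilon_1 = -1$ and $\varepsilon_2 = \varepsilon_3 = \varepsilon_4 = 1$), the trigonometric identity $\cos^2 t \cos^2 w + \sin^2 t \cos^2 w + \sin^2 w = 1$ gives $S_1 = 1$. For $j \in \{2,3,4\}$ (so $\varepsilon_1 = 1$ and exactly one $\varepsilon_i = -1$ among $i \ge 2$, namely the one attached to the $\cosh t \cosh w$ coefficient), the hyperbolic identity $-\cosh^2 t \cosh^2 w + \sinh^2 t \cosh^2 w + \sinh^2 w = \sinh^2 w - \cosh^2 w = -1$ gives $S_j = -1$. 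In either situation $S_j = -\varepsilon_1$, and substituting back yields
\[
\langle N^{\{j;\lambda\}}, N^{\{j;\lambda\}} \rangle = \varepsilon_1 r^{\prime 2} + (r^{\prime 2} - \lambda\varepsilon_1)(-\varepsilon_1) = \lambda,
\]
which is the desired identity $\varepsilon = \lambda$.

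The entire argument is essentially bookkeeping; there is no real obstacle, only the care required to track which index $i$ carries the timelike sign in each of the eight configurations when writing $S_j$. The Pythagorean and hyperbolic identities then collapse the sum uniformly to $\pm 1$, so that all eight cases reduce to the same clean conclusion. As a sanity check, one can equivalently read off $\operatorname{sgn}(\det g_{ij}^{\{j;\lambda\}}) = \operatorname{sgn}(-\lambda)$ directly from (\ref{detgtoplu}) (using the standing hypothesis $r^{\prime 2} > \lambda \varepsilon_1$), which confirms that the induced metric has Riemannian signature when $\lambda = -1$ and Lorentzian signature when $\lambda = 1$.
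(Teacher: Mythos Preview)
Your proof is correct and follows essentially the same approach as the paper: the paper merely remarks that the proposition follows from the explicit normal vectors (\ref{normaller}) or the determinant formula (\ref{detgtoplu}), and you have carried out exactly that verification, computing $\langle N^{\{j;\lambda\}},N^{\{j;\lambda\}}\rangle=\lambda$ case by case and noting the $\det g_{ij}$ confirmation. There is nothing to add.
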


\subsection{\textbf{SOME GEOMETRIC CHARACTERIZATIONS FOR CANAL HYPERSURFACES}}

\

From (\ref{Kgenel}) and (\ref{Hgenel}), we can obtain the following important
relation between the Gaussian and mean curvatures of the canal hypersurfaces
$\mathfrak{C}^{\{j;\lambda\}}$:

\begin{theorem}
\textit{The Gaussian and mean curvatures of the canal hypersurfaces
}$\mathfrak{C}^{\{j;\lambda\}}(s,t,w),$ given by \textit{(\ref{geneldenklemyy}%
) in }$E_{1}^{4},$ \textit{satisfy}%
\begin{equation}
3H^{\{j;\lambda\}}r-K^{\{j;\lambda\}}r^{3}-2\varepsilon_{3}\varepsilon
_{4}\lambda^{j}=0. \label{yhk}%
\end{equation}

\end{theorem}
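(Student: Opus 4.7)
The plan is to prove the identity by direct substitution of the explicit formulas for $K^{\{j;\lambda\}}$ and $H^{\{j;\lambda\}}$ already derived in (\ref{Kgenel}) and (\ref{Hgenel}), and I would exploit the algebraic structure that becomes apparent once one inspects the principal curvature formulas (\ref{aslitoplu}).

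The cleanest route uses the principal curvatures. Denote
\[
\mu_{1}^{\{j;\lambda\}}=\mu_{2}^{\{j;\lambda\}}=\frac{\varepsilon_{3}\varepsilon_{4}\lambda^{j}}{r},\qquad \mu_{3}^{\{j;\lambda\}}=\frac{\varepsilon_{3}\varepsilon_{4}\lambda^{j}\,E}{F^{2}},
\]
where $E$ and $F$ stand for the numerator (without the leading sign factor) and denominator expressions appearing in (\ref{Kgenel}) and (\ref{aslitoplu}). From the shape operator computations used to derive (\ref{Hgenel}) and (\ref{Kgenel}) one has the standard identities $3H^{\{j;\lambda\}}=\mu_{1}+\mu_{2}+\mu_{3}$ and $K^{\{j;\lambda\}}=\mu_{1}\mu_{2}\mu_{3}$ (the sign factors $\varepsilon$ and $\varepsilon_{3}\varepsilon_{4}\lambda^{j}$ built into (\ref{yy4}), (\ref{yy5}) and (\ref{aslitoplu}) are consistent because $(\varepsilon_{3}\varepsilon_{4}\lambda^{j})^{2}=1$). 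Using $\mu_{1}=\mu_{2}=\varepsilon_{3}\varepsilon_{4}\lambda^{j}/r$, one computes
\[
3H^{\{j;\lambda\}}r=(\mu_{1}+\mu_{2}+\mu_{3})r=2\varepsilon_{3}\varepsilon_{4}\lambda^{j}+\mu_{3}r
\]
and
\[
K^{\{j;\lambda\}}r^{3}=\mu_{1}\mu_{2}\mu_{3}\,r^{3}=\frac{(\varepsilon_{3}\varepsilon_{4}\lambda^{j})^{2}}{r^{2}}\,\mu_{3}r^{3}=\mu_{3}r,
\]
so subtraction gives $3H^{\{j;\lambda\}}r-K^{\{j;\lambda\}}r^{3}=2\varepsilon_{3}\varepsilon_{4}\lambda^{j}$, which is (\ref{yhk}).

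If one prefers to avoid appealing to the principal curvatures, the same cancellation can be read off directly: multiplying (\ref{Hgenel}) by $r$ yields $3H^{\{j;\lambda\}}r=\varepsilon_{3}\varepsilon_{4}\lambda^{j}(2+rE/F^{2})$, while multiplying (\ref{Kgenel}) by $r^{3}$ yields $K^{\{j;\lambda\}}r^{3}=\varepsilon_{3}\varepsilon_{4}\lambda^{j}\,rE/F^{2}$, and the $rE/F^{2}$ terms cancel. I do not anticipate any real obstacle: the content of the lemma is precisely that the ``complicated'' part of both $H$ and $K$ is the same expression $\mu_{3}$ up to a factor of $r$, because two principal curvatures coincide and equal $\varepsilon_{3}\varepsilon_{4}\lambda^{j}/r$. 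The only bookkeeping care needed is tracking the sign/causal-character factors $\varepsilon_{1},\varepsilon_{2},\varepsilon_{3},\varepsilon_{4}$ and the parity of $\lambda^{j}$, but these have already been absorbed into the uniform statement of (\ref{aslitoplu}), so the argument proceeds uniformly in $j\in\{1,2,3,4\}$ and $\lambda\in\{-1,1\}$.
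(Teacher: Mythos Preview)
Your proof is correct and follows essentially the same route as the paper, which simply states that the relation follows from the explicit formulas (\ref{Kgenel}) and (\ref{Hgenel}); your direct-substitution computation makes this explicit, and your principal-curvature reformulation via (\ref{aslitoplu}) is a clean way to package the cancellation since $(\varepsilon_{3}\varepsilon_{4}\lambda^{j})^{2}=1$.
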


\begin{theorem}
The canal hypersurfaces $\mathfrak{C}^{\{j;\lambda\}}(s,t,w),$ given by
\textit{(\ref{geneldenklemyy})} in $E_{1}^{4},$ are flat if and only if
$k_{1}=0$ and $r(s)=as+b$, $a,b\in%
\mathbb{R}
,$ $a\neq\mp1$ .
\end{theorem}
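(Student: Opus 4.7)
The plan is to exploit that the Gaussian curvature formula (\ref{Kgenel}) depends on $(t,w)$ only through the single function $f_{j}$ listed in (\ref{fj}), while every other ingredient depends on $s$ alone. First I would observe that the denominator of $K^{\{j;\lambda\}}$ is nonvanishing on the regular locus of $\mathfrak{C}^{\{j;\lambda\}}$, since by (\ref{detgtoplu}) it equals $\det[g_{ij}]$ up to a nonzero factor. Hence $K^{\{j;\lambda\}}\equiv 0$ is equivalent to the vanishing of its numerator, namely
\[
rk_{1}^{2}f_{j}^{2}\bigl(r'^{2}-\lambda\varepsilon_{1}\bigr)+\varepsilon_{2}\lambda k_{1}f_{j}\sqrt{r'^{2}-\lambda\varepsilon_{1}}\bigl(r'^{2}-\lambda\varepsilon_{1}+2rr''\bigr)+r''\bigl(r'^{2}-\lambda\varepsilon_{1}+rr''\bigr)\equiv 0.
\]

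The key step is to view this identity as a quadratic polynomial in $f_{j}$ with coefficients depending only on $s$, and then to argue that $1$, $f_{j}$ and $f_{j}^{2}$ are functionally independent on the $(t,w)$-domain; a direct inspection of (\ref{fj}) confirms this in each of the four cases $j=1,2,3,4$. Separating the three coefficients and setting each equal to zero for every $s$, I would extract from the $f_{j}^{2}$-coefficient, together with $r>0$ and the standing hypothesis $r'^{2}>\lambda\varepsilon_{1}$, the condition $k_{1}\equiv 0$; the middle coefficient then vanishes automatically, and the constant coefficient collapses to $r''(r'^{2}-\lambda\varepsilon_{1}+rr'')=0$.

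For this last equation, the alternative $r'^{2}-\lambda\varepsilon_{1}+rr''=0$ must be excluded, because with $k_{1}=0$ it also annihilates the denominator of $K^{\{j;\lambda\}}$ and makes $\det[g_{ij}]$ vanish by (\ref{detgtoplu}), contradicting regularity. Therefore $r''\equiv 0$, which integrates to $r(s)=as+b$ with $a,b\in\mathbb{R}$; the restriction $a\neq\mp 1$ arises precisely from keeping $r'(s)^{2}=a^{2}>\lambda\varepsilon_{1}$ strict in the cases where $\lambda\varepsilon_{1}=1$ (in the remaining cases the inequality is automatic). The converse implication is a one-line substitution: setting $k_{1}=0$ and $r''=0$ annihilates each summand in the numerator, so $\mathfrak{C}^{\{j;\lambda\}}$ is flat.

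The main obstacle is the separation step, i.e.\ legitimizing the passage from \emph{the polynomial in $f_{j}$ vanishes identically on the $(t,w)$-range} to \emph{each $s$-dependent coefficient vanishes}; modulo this, the proof reduces to reading off $k_{1}$ and $r''$ from the coefficients, together with a careful exclusion of the degenerate branch $r'^{2}-\lambda\varepsilon_{1}+rr''=0$, which must be ruled out by regularity of the induced metric rather than by the numerator equation itself.
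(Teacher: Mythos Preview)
Your proposal is correct and follows essentially the same route as the paper: set the numerator of (\ref{Kgenel}) to zero, use the linear independence of $\{1,f_{j},f_{j}^{2}\}$ to force each $s$-dependent coefficient to vanish, extract $k_{1}=0$ from the leading coefficient, and rule out the branch $r'^{2}-\lambda\varepsilon_{1}+rr''=0$ by noting it kills the denominator (equivalently, $\det[g_{ij}]$), leaving $r''=0$. Your write-up is in fact slightly more careful than the paper's, in that you explicitly tie the nonvanishing of the denominator to regularity via (\ref{detgtoplu}) and you explain where the constraint $a\neq\mp 1$ actually comes from.
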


\begin{proof}
Firstly, let we suppose that the canal hypersurfaces $\mathfrak{C}%
^{\{j;\lambda\}}(s,t,w)$ in $E_{1}^{4}$ are flat. Then from (\ref{Kgenel}) we
get
\begin{equation}
\text{ }{\small rk}_{1}^{2}\left(  r^{\prime}{}^{2}-\lambda\varepsilon
_{1}\right)  {\small f}_{j}^{2}{\small +\varepsilon}_{2}{\small \lambda}%
^{j}{\small k}_{1}\sqrt{r^{\prime}{}^{2}-\lambda\varepsilon_{1}}\left(
r^{\prime}{}^{2}-\lambda\varepsilon_{1}{\small +2rr}^{\prime\prime}\right)
{\small f}_{j}{\small +r}^{\prime\prime}\left(  r^{\prime}{}^{2}%
-\lambda\varepsilon_{1}+rr^{\prime\prime}\right)  =0. \label{flat1}%
\end{equation}

Since $\{f_{j},f_{j}^{2},1\}$ are linearly independent, we have%
\begin{equation}
{\small rk}_{1}^{2}\left(  r^{\prime}{}^{2}-\lambda\varepsilon_{1}\right)
={\small \varepsilon}_{2}{\small \lambda}^{j}{\small k}_{1}\sqrt{r^{\prime}%
{}^{2}-\lambda\varepsilon_{1}}\left(  r^{\prime}{}^{2}-\lambda\varepsilon
_{1}{\small +2rr}^{\prime\prime}\right)  ={\small r}^{\prime\prime}\left(
r^{\prime}{}^{2}-\lambda\varepsilon_{1}+rr^{\prime\prime}\right)  =0.
\label{flat2}%
\end{equation}

From the (\ref{flat2}), we get $k_{1}=0$ and ${\small r}^{\prime\prime}\left(
r^{\prime}{}^{2}-\lambda\varepsilon_{1}+rr^{\prime\prime}\right)  =0.$ When
$k_{1}=0$, from (\ref{Kgenel}) we have $r^{\prime}{}^{2}-\lambda
\varepsilon_{1}+rr^{\prime\prime}\neq0$ and so it must be $r^{\prime\prime}=0$.

Conversely, if $k_{1}=0$ and $r(s)=as+b$, $a,b\in%
\mathbb{R}
,$ $a\neq\mp1$, from (\ref{Kgenel}) we get $K=0$ and this completes the proof.
\end{proof}

\begin{theorem}
The canal hypersurfaces $\mathfrak{C}^{\{j;\lambda\}}(s,t,w),$ given by
\textit{(\ref{geneldenklemyy})} in $E_{1}^{4},$ are minimal if and only if
$k_{1}=0$ and the radius $r(s)$ satisfies $%
{\displaystyle\int}
\frac{dr}{\sqrt{{\small \varepsilon_{1}\lambda}+\left(  \frac{c_{1}}%
{r}\right)  ^{\frac{4}{3}}}}=\pm s+c_{2},$ $c_{1},c_{2}\in%
\mathbb{R}
$.
\end{theorem}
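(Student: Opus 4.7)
The plan is to set the mean curvature formula (\ref{Hgenel}) equal to zero and exploit the fact that the parameters $t, w$ enter $H^{\{j;\lambda\}}$ only through the single scalar $f_{j}(t,w)$ defined in (\ref{fj}). To make the bookkeeping transparent I will abbreviate
$$\alpha = r'^{2} - \lambda\varepsilon_{1} + r r'', \qquad \beta = \varepsilon_{2}\lambda r k_{1} \sqrt{r'^{2} - \lambda\varepsilon_{1}},$$
$$\gamma = r k_{1}^{2} (r'^{2} - \lambda\varepsilon_{1}), \qquad \eta = \varepsilon_{2} \lambda k_{1} \sqrt{r'^{2}-\lambda\varepsilon_{1}}\bigl(r'^{2} - \lambda\varepsilon_{1} + 2 r r''\bigr),$$
so that the denominator in the second summand of (\ref{Hgenel}) is $D = \alpha + \beta f_{j}$ and its numerator is $N = \gamma f_{j}^{2} + \eta f_{j} + r''\alpha$. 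Clearing the condition $\tfrac{2}{r} + N/D^{2} = 0$ turns minimality into the polynomial identity
$$(2\beta^{2} + r\gamma)\, f_{j}^{2} + (4\alpha\beta + r\eta)\, f_{j} + \alpha\bigl(2\alpha + r r''\bigr) = 0.$$

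Next I will observe that in each of the four cases of (\ref{fj}) the function $f_{j}$ depends non-trivially on $(t,w)$, whereas every other quantity in the identity depends only on $s$. Consequently the coefficients of $1$, $f_{j}$, $f_{j}^{2}$ must vanish separately. A short computation gives $2\beta^{2} + r\gamma = 3 r^{2} k_{1}^{2} (r'^{2}-\lambda\varepsilon_{1})$; since $r>0$ and the standing hypothesis of Theorem \ref{teo1} gives $r'^{2} - \lambda\varepsilon_{1} > 0$, this forces $k_{1} \equiv 0$. With $k_{1} = 0$ the linear coefficient vanishes automatically, and because $D = \alpha$ cannot be zero (otherwise $H^{\{j;\lambda\}}$ itself would be undefined), the constant term reduces to $2\alpha + r r'' = 0$; equivalently,
$$3 r r'' + 2 r'^{2} = 2\lambda\varepsilon_{1}.$$

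The final step is to integrate this ODE. Setting $u(r) = r'^{2}$, the chain rule gives $r'' = \tfrac{1}{2}\, du/dr$, so the equation becomes the linear first-order ODE $u_{r} + \tfrac{4}{3r}\, u = \tfrac{4\lambda\varepsilon_{1}}{3r}$. The integrating factor $r^{4/3}$ produces $r'^{2} = \lambda\varepsilon_{1} + c\, r^{-4/3}$; absorbing the integration constant as $c = c_{1}^{4/3}$ and separating variables once more yields the asserted implicit formula. The converse is immediate: if $k_{1} = 0$ and $r$ satisfies this ODE, then each of the three coefficients above vanishes and (\ref{Hgenel}) gives $H^{\{j;\lambda\}} = 0$. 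I expect the only real obstacle to be the algebraic reduction of $H^{\{j;\lambda\}}=0$ to the clean polynomial identity in $f_{j}$; once that identity is in hand, its splitting into three scalar equations in $s$ and the single quadrature for $r(s)$ are routine.
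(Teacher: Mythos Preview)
Your proposal is correct and follows essentially the same strategy as the paper: clear the minimality condition $H^{\{j;\lambda\}}=0$ to a quadratic in $f_{j}$, use the fact that $\{1,f_{j},f_{j}^{2}\}$ are linearly independent (equivalently, that $f_{j}$ ranges over an interval in each case of (\ref{fj})) to split into three scalar equations in $s$, deduce $k_{1}=0$ from the leading coefficient, rule out $\alpha=0$ from the constant term, and integrate the remaining ODE. Your abbreviations $\alpha,\beta,\gamma,\eta$ make the factorization $\alpha(2\alpha+rr'')=0$ appear immediately, whereas the paper expands everything first and then refactors as $(2\varepsilon_{1}\lambda-2r'^{2}-3rr'')(\varepsilon_{1}\lambda-r'^{2}-rr'')=0$; these are the same two factors up to sign. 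The only genuine methodological difference is in the quadrature: the paper substitutes $h=r'$ and separates $\frac{3h\,dh}{2(\varepsilon_{1}\lambda-h^{2})}=\frac{dr}{r}$, while you substitute $u=r'^{2}$ to obtain the linear equation $u_{r}+\tfrac{4}{3r}u=\tfrac{4\lambda\varepsilon_{1}}{3r}$ and apply the integrating factor $r^{4/3}$. Both routes are standard and yield the same implicit formula for $r(s)$.
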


\begin{proof}
Firstly, let we suppose that the canal hypersurfaces $\mathfrak{C}%
^{\{j;\lambda\}}(s,t,w)$ in $E_{1}^{4}$ are minimal. Then from (\ref{Hgenel})
we get%
\begin{align}
&  -3{\small r}^{2}{\small k}_{1}^{2}\left(  {\small \varepsilon}%
_{1}{\small \lambda-r}^{\prime}{\small {}}^{2}\right)  f_{j}^{2}%
-{\small \varepsilon_{2}\lambda}k_{1}r\sqrt{r^{\prime}{}^{2}-\lambda
\varepsilon_{1}}\left(  5{\small \varepsilon_{1}\lambda-5r}^{\prime
2}-6rr^{\prime\prime}\right)  {\small f}_{j}\nonumber\\
&  \text{ }+2+2r^{\prime4}-5{\small \varepsilon_{1}\lambda rr}^{\prime\prime
}+3r^{2}r^{\prime\prime2}+r^{\prime2}\left(  -4{\small \varepsilon_{1}%
\lambda+5}rr^{\prime\prime}\right)  =0. \label{min1}%
\end{align}
Since $\{f_{j},f_{j}^{2},1\}$ are linearly independent, we have%
\begin{equation}
\left.
\begin{array}
[c]{l}%
-3{\small r}^{2}{\small k}_{1}^{2}\left(  {\small \varepsilon}_{1}%
{\small \lambda-r}^{\prime}{\small {}}^{2}\right)  =0,\\
{\small \varepsilon_{2}\lambda}k_{1}r\sqrt{r^{\prime}{}^{2}-\lambda
\varepsilon_{1}}\left(  5{\small \varepsilon_{1}\lambda-5r}^{\prime
2}-6rr^{\prime\prime}\right)  =0,\\
2+2r^{\prime4}-5{\small \varepsilon_{1}\lambda rr}^{\prime\prime}%
+3r^{2}r^{\prime\prime2}+r^{\prime2}\left(  -4{\small \varepsilon_{1}%
\lambda+5}rr^{\prime\prime}\right)  =0.
\end{array}
\right\}  \label{min2}%
\end{equation}
From (\ref{min2}), we get $k_{1}=0$ and $2+2r^{\prime4}-5{\small \varepsilon
_{1}\lambda rr}^{\prime\prime}+3r^{2}r^{\prime\prime2}+r^{\prime2}\left(
-4{\small \varepsilon_{1}\lambda+5}rr^{\prime\prime}\right)  =0;$ i.e.%
\begin{equation}
\left(  2{\small \varepsilon_{1}\lambda}-2r^{\prime2}-3{\small rr}%
^{\prime\prime}\right)  \left(  {\small \varepsilon_{1}\lambda-}r^{\prime
2}-{\small rr}^{\prime\prime}\right)  =0. \label{min3}%
\end{equation}
When $k_{1}=0$, from (\ref{Hgenel}) we have $r^{\prime}{}^{2}-\lambda
\varepsilon_{1}+rr^{\prime\prime}\neq0$ and so it must be%
\begin{equation}
-2(r^{\prime2}-{\small \varepsilon_{1}\lambda})-3{\small rr}^{\prime\prime
}=0\text{.} \label{min4}%
\end{equation}

Now, let us solve the equation (\ref{min4}).

If we take $r^{\prime}{}(s)=h(s),$ we get
\begin{equation}
r^{\prime\prime}=h^{\prime}=\frac{dh}{dr}\frac{dr}{ds}=\frac{dh}{dr}h.
\label{min5}%
\end{equation}
Using (\ref{min5}) in (\ref{min4}), we have%
\begin{equation}
3r\frac{dh}{dr}h+2h^{2}-2{\small \varepsilon_{1}\lambda}=0. \label{min6}%
\end{equation}
From (\ref{min4}), $r^{\prime}{}(s)=h(s)\neq0$ and so we reach that%
\begin{equation}
\frac{3h}{2({\small \varepsilon_{1}\lambda}-h^{2})}dh=\frac{dr}{r}.
\label{min7}%
\end{equation}
By integrating (\ref{min7}), we have%
\begin{equation}
\text{ }h=\pm\sqrt{{\small \varepsilon_{1}\lambda}+\left(  \frac{c_{1}}%
{r}\right)  ^{\frac{4}{3}}}, \label{min8}%
\end{equation}
where $c_{1}$ is constant. Since $r^{\prime}{}=\frac{dr}{ds}=h,$ from
(\ref{min8}) we get%
\begin{equation}%
{\displaystyle\int}
\frac{dr}{\sqrt{{\small \varepsilon_{1}\lambda}+\left(  \frac{c_{1}}%
{r}\right)  ^{\frac{4}{3}}}}=\pm%
{\displaystyle\int}
ds. \label{min9y}%
\end{equation}

Conversely, if $k_{1}=0$ and $r(s)$ satisfies $%
{\displaystyle\int}
\frac{dr}{\sqrt{{\small \varepsilon_{1}\lambda}+\left(  \frac{c_{1}}%
{r}\right)  ^{\frac{4}{3}}}}=\pm s+c_{2},$ $c_{1},c_{2}\in%
\mathbb{R}
$, then we have $H=0$ and this completes the proof.
\end{proof}

Now, if%
\begin{equation}
H_{s}K_{t}-H_{t}K_{s}=0,\text{ }H_{s}K_{w}-H_{w}K_{s}=0,\text{ }H_{t}%
K_{w}-H_{w}K_{t}=0, \label{w1}%
\end{equation}
hold on a hypersurface, then we call the hypersurface as $(H,K)_{st}%
$-Weingarten, $(H,K)_{sw}$-Weingarten, $(H,K)_{tw}$-Weingarten hypersurface,
respectively, where $H_{s}=\frac{\partial H}{\partial s}$ and so on. So, from
(\ref{Kgenel}) and (\ref{Hgenel}) we have

\begin{theorem}
The canal hypersurfaces $\mathfrak{C}^{\{j;\lambda\}}(s,t,w),$ given by
\textit{(\ref{geneldenklemyy})} in $E_{1}^{4},$ are $(H,K)_{tw}$-Weingarten.
\end{theorem}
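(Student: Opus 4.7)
The plan is to exploit the observation that, although the expressions for $K^{\{j;\lambda\}}$ and $H^{\{j;\lambda\}}$ in (\ref{Kgenel}) and (\ref{Hgenel}) look intimidating, the variables $t$ and $w$ enter both quantities \emph{only} through the single scalar function $f_{j}(t,w)$ defined in (\ref{fj}). So I would first isolate this structural fact: writing $K^{\{j;\lambda\}}=\mathcal{K}(s,f_{j})$ and $H^{\{j;\lambda\}}=\mathcal{H}(s,f_{j})$ for smooth functions $\mathcal{K}$ and $\mathcal{H}$ of two variables, the dependence on $(t,w)$ is entirely mediated by $f_{j}$.

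Once this is in place, the chain rule gives
\begin{equation*}
K_{t}=\mathcal{K}_{f_{j}}\,(f_{j})_{t},\quad K_{w}=\mathcal{K}_{f_{j}}\,(f_{j})_{w},\quad H_{t}=\mathcal{H}_{f_{j}}\,(f_{j})_{t},\quad H_{w}=\mathcal{H}_{f_{j}}\,(f_{j})_{w},
\end{equation*}
and therefore
\begin{equation*}
H_{t}K_{w}-H_{w}K_{t}=\mathcal{H}_{f_{j}}\mathcal{K}_{f_{j}}\bigl((f_{j})_{t}(f_{j})_{w}-(f_{j})_{w}(f_{j})_{t}\bigr)=0,
\end{equation*}
which is exactly the $(H,K)_{tw}$-Weingarten condition from (\ref{w1}).

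The main (and only nontrivial) step is thus verifying the structural claim that $K^{\{j;\lambda\}}$ and $H^{\{j;\lambda\}}$ depend on $(t,w)$ solely through $f_{j}$. I would do this by direct inspection of (\ref{Kgenel}) and (\ref{Hgenel}): every occurrence of $t$ or $w$ in their numerators and denominators appears inside the combination $f_{j}$ (sometimes as $f_{j}^{2}$), while all other factors depend only on $s$ through $r(s)$, $r'(s)$, $r''(s)$, $k_{1}(s)$ and the constants $\varepsilon_{1},\varepsilon_{2},\varepsilon_{3},\varepsilon_{4},\lambda$. Once this inspection is carried out for each of the four cases $j\in\{1,2,3,4\}$, the chain-rule computation above finishes the proof.

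I do not anticipate any real obstacle: the only point that must be handled carefully is showing that $t$ and $w$ do not sneak into $K$ or $H$ outside of $f_{j}$ (for instance, one might worry about a stray $\cos w$ from the normalization factor $A$ in (\ref{detgtoplu})--(\ref{dethtoplu})), but the $A^{2}$ factors cancel between $\det h_{ij}$ and $\det g_{ij}$ in the Gaussian curvature formula (\ref{yy4}), and they are likewise absorbed in the shape-operator computation leading to (\ref{Hgenel}), so the final expressions depend on $(t,w)$ only through $f_{j}$, as claimed.
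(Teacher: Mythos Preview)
Your proposal is correct and, at its core, is the same computation the paper performs: the paper's proof is the single line ``from (\ref{Kgenel}) and (\ref{Hgenel}) we get $H_{t}K_{w}-H_{w}K_{t}=0$,'' which is exactly the identity you derive. The difference is only in presentation: you make explicit the structural reason the Jacobian vanishes---namely that both curvatures depend on $(t,w)$ only through the single scalar $f_{j}$---whereas the paper leaves this as an implicit feature of the direct calculation. Your chain-rule factorization is the natural explanation for why the brute-force check succeeds, and your remark about the cancellation of the $A^{2}$ factor between (\ref{detgtoplu}) and (\ref{dethtoplu}) correctly disposes of the one place where an extra $(t,w)$-dependence might have appeared.
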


\begin{proof}
From (\ref{Kgenel}) and (\ref{Hgenel}), we get $H_{t}^{\{j;\lambda\}}%
K_{w}^{\{j;\lambda\}}-H_{w}^{\{j;\lambda\}}K_{t}^{\{j;\lambda\}}=0$ and this
completes the proof.
\end{proof}

\begin{theorem}
\label{swteo}The canal hypersurfaces $\mathfrak{C}^{\{j;\lambda\}}(s,t,w),$
given by \textit{(\ref{geneldenklemyy}) in }$E_{1}^{4}$, are $(H,K)_{sw}%
$-Weingarten if and only if $k_{1}=0$ or $r(s)=$constant$.$
\end{theorem}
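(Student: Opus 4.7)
The plan is to exploit the algebraic relation (\ref{yhk}) proved in the previous theorem, which gives $H = \frac{r^{2}}{3}K + \frac{2\varepsilon_{3}\varepsilon_{4}\lambda^{j}}{3r}$. Since $r$ depends on $s$ alone, differentiating this identity yields $H_{w} = \frac{r^{2}}{3}K_{w}$ and $H_{s} = \frac{r^{2}}{3}K_{s} + \frac{2rr'}{3}K - \frac{2\varepsilon_{3}\varepsilon_{4}\lambda^{j}r'}{3r^{2}}$. Substituting into $H_{s}K_{w}-H_{w}K_{s}$, the $K_{s}K_{w}$ contributions cancel and we are left with the factorization
\[
H_{s}K_{w}-H_{w}K_{s} \;=\; \frac{2r'}{3r^{2}}\,K_{w}\,\bigl(r^{3}K-\varepsilon_{3}\varepsilon_{4}\lambda^{j}\bigr).
\]

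The second step is to dispose of the last factor. Set $\alpha = r'{}^{2}-\lambda\varepsilon_{1}$, $\beta = \varepsilon_{2}\lambda k_{1}\sqrt{\alpha}$, and $D = \alpha + r\beta f_{j} + rr''$; note the key identity $\beta^{2}=k_{1}^{2}\alpha$. A short algebraic reduction of (\ref{Kgenel}) (the cancellation $r^{2}\beta^{2}f_{j}^{2} = r^{2}k_{1}^{2}\alpha f_{j}^{2}$ removes the quadratic-in-$f_{j}$ term) produces the compact form $r^{3}K - \varepsilon_{3}\varepsilon_{4}\lambda^{j} = -\varepsilon_{3}\varepsilon_{4}\lambda^{j}\alpha/D$. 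Under the standing assumption $r'{}^{2}>\lambda\varepsilon_{1}$ we have $\alpha>0$, so this factor is nowhere zero on the regular hypersurface, and the $(H,K)_{sw}$-Weingarten condition collapses to $r'\,K_{w}\equiv 0$.

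Both implications now fall out quickly. If $r$ is constant then $r'=0$; if $k_{1}=0$, inspection of (\ref{Kgenel}) shows that all $f_{j}$-dependence disappears from $K$, so $K_{w}=0$. Conversely, suppose the Weingarten identity holds and $r$ is non-constant. On an open $s$-interval with $r'\neq 0$ we need $K_{w}\equiv 0$; since $w$ enters $K$ only through $f_{j}$ and $(f_{j})_{w}$ is not identically zero there, $K$ must be independent of $f_{j}$. Using the companion identity $r^{3}K = \varepsilon_{3}\varepsilon_{4}\lambda^{j}(r\beta f_{j}+rr'')/D$ and viewing $K$ as a M\"obius function of $f_{j}$, its derivative in $f_{j}$ vanishes iff the cross-determinant $r\beta(\alpha+rr'')-rr''\cdot r\beta = r\alpha\beta$ vanishes; since $r,\alpha>0$, this forces $\beta=0$ and hence $k_{1}=0$. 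The only real computational hurdle is the simplification in the second paragraph; once one has the compact expression for $r^{3}K - \varepsilon_{3}\varepsilon_{4}\lambda^{j}$, everything else is formal manipulation of a degenerate M\"obius transformation.
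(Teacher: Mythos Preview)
Your proof is correct and arrives at exactly the same endpoint as the paper's argument, namely the pointwise condition $k_{1}r'=0$, but the route is genuinely different. The paper computes $H_{s}K_{w}-H_{w}K_{s}$ directly from the explicit expressions (\ref{Kgenel}) and (\ref{Hgenel}), obtains a large rational function in $f_{j}$ with denominator $D^{5}$, and then simplifies the numerator by hand until only the factor $k_{1}r'$ survives. You instead exploit the linear relation (\ref{yhk}) between $H$ and $K$ to get the factorisation $H_{s}K_{w}-H_{w}K_{s}=\frac{2r'}{3r^{2}}K_{w}\bigl(r^{3}K-\varepsilon_{3}\varepsilon_{4}\lambda^{j}\bigr)$ for free, and your key computation is the clean identity $r^{3}K-\varepsilon_{3}\varepsilon_{4}\lambda^{j}=-\varepsilon_{3}\varepsilon_{4}\lambda^{j}\alpha/D$, which is nonvanishing by the standing hypothesis $\alpha=r'^{2}-\lambda\varepsilon_{1}>0$. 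The remaining M\"obius argument that $K_{w}=0$ forces $\beta=0$ is equivalent to the paper's observation that the non-$k_{1}r'$ factors in its simplified numerator are never zero. Your approach has the advantage of explaining \emph{why} the massive cancellation in the paper's computation occurs: it is a direct consequence of the affine relation (\ref{yhk}), and the same trick would immediately yield the $(H,K)_{st}$ and $(H,K)_{tw}$ results as well.
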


\begin{proof}
From (\ref{Kgenel}) and (\ref{Hgenel}), we have%
\begin{align}
&  H_{s}^{\{j;\lambda\}}K_{w}^{\{j;\lambda\}}-H_{w}^{\{j;\lambda\}}%
K_{s}^{\{j;\lambda\}}\label{sw}\\
&  =-\frac{\left(
\begin{array}
[c]{l}%
2{\small \varepsilon_{3}^{2}\varepsilon_{4}^{2}}\lambda^{2j}k_{1}r^{\prime
}\left(  r^{\prime}{}^{2}-\lambda\varepsilon_{1}\right)  ^{2}\\
\left(  \left(  {\small \varepsilon_{2}^{2}\lambda}^{2}-1\right)  f_{j}%
^{2}k_{1}^{2}r^{2}+r^{\prime2}-{\small \lambda\varepsilon}_{1}%
+{\small \varepsilon}_{2}{\small \lambda}k_{1}r\sqrt{r^{\prime}{}^{2}%
-\lambda\varepsilon_{1}}{\small f}_{j}+rr^{\prime\prime}\right) \\
\left(
\begin{array}
[c]{l}%
{\small \varepsilon}_{2}{\small \lambda}\sqrt{r^{\prime}{}^{2}-\lambda
\varepsilon_{1}}\left(  {\small r}^{\prime2}-{\small \lambda\varepsilon}%
_{1}+rr^{\prime\prime}\right) \\
+k_{1}r\left(  \left(  2-{\small \varepsilon}_{2}^{2}{\small \lambda}%
^{2}\right)  \left(  r^{\prime2}-{\small \lambda\varepsilon}_{1}\right)
-2\left(  {\small \varepsilon}_{2}^{2}{\small \lambda}^{2}-1\right)
rr^{\prime\prime}\right)  f_{j}%
\end{array}
\right)  f_{j_{w}}%
\end{array}
\right)  }{3r^{4}\left(  {\small r}^{\prime2}-{\small \lambda\varepsilon}%
_{1}+{\small \varepsilon}_{2}{\small \lambda}k_{1}r\sqrt{r^{\prime}{}%
^{2}-\lambda\varepsilon_{1}}{\small f}_{j}+rr^{\prime\prime}\right)  ^{5}%
}.\nonumber
\end{align}
If $H_{s}^{\{j;\lambda\}}K_{w}^{\{j;\lambda\}}-H_{w}^{\{j;\lambda\}}%
K_{s}^{\{j;\lambda\}}=0$, then from (\ref{sw}) we have%
\begin{equation}
2k_{1}r^{\prime}\left(  r^{\prime}{}^{2}-\lambda\varepsilon_{1}\right)
^{2}\left(
\begin{array}
[c]{l}%
r^{\prime}{}^{2}-\lambda\varepsilon_{1}+rr^{\prime\prime}\\
+{\small \varepsilon}_{2}{\small \lambda}k_{1}r\sqrt{r^{\prime}{}^{2}%
-\lambda\varepsilon_{1}}{\small f}_{j}%
\end{array}
\right)  \left(
\begin{array}
[c]{l}%
{\small \varepsilon}_{2}{\small \lambda}\sqrt{r^{\prime}{}^{2}-\lambda
\varepsilon_{1}}\left(  r^{\prime}{}^{2}-\lambda\varepsilon_{1}+rr^{\prime
\prime}\right) \\
+k_{1}r\left(  r^{\prime}{}^{2}-\lambda\varepsilon_{1}\right)  f_{j}%
\end{array}
\right)  f_{j_{w}}=0 \label{sw2}%
\end{equation}
and so,%
\begin{equation}
k_{1}r^{\prime}\left(
\begin{array}
[c]{l}%
r^{\prime}{}^{2}-\lambda\varepsilon_{1}+rr^{\prime\prime}\\
+{\small \varepsilon}_{2}{\small \lambda}k_{1}r\sqrt{r^{\prime}{}^{2}%
-\lambda\varepsilon_{1}}{\small f}_{j}%
\end{array}
\right)  \left(
\begin{array}
[c]{l}%
{\small \varepsilon}_{2}{\small \lambda}\sqrt{r^{\prime}{}^{2}-\lambda
\varepsilon_{1}}\left(  r^{\prime}{}^{2}-\lambda\varepsilon_{1}+rr^{\prime
\prime}\right) \\
+k_{1}r\left(  r^{\prime}{}^{2}-\lambda\varepsilon_{1}\right)  f_{j}%
\end{array}
\right)  =0. \label{sw3}%
\end{equation}
From (\ref{sw}), the second and third component of (\ref{sw3}) cannot be zero
and so it must be
\begin{equation}
k_{1}r^{\prime}=0. \label{sw4}%
\end{equation}
This completes the proof.
\end{proof}

\begin{theorem}
The canal hypersurfaces $\mathfrak{C}^{\{j;\lambda\}}(s,t,w),$ given by
\textit{(\ref{geneldenklemyy}) in }$E_{1}^{4}$, are $(H,K)_{st}$-Weingarten
and the canal hypersurfaces $\mathfrak{C}^{\{1;\lambda\}}(s,t,w),$
$\mathfrak{C}^{\{2;\lambda\}}(s,t,w),$ $\mathfrak{C}^{\{3;\lambda\}}(s,t,w)$
are $(H,K)_{st}$-Weingarten if and only if $k_{1}=0$ or $r(s)=$constant$.$
\end{theorem}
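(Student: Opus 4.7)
The plan is to exploit the key relation \eqref{yhk} proved just above, which upon solving for $H$ gives
\[
H^{\{j;\lambda\}} \;=\; \tfrac{1}{3}K^{\{j;\lambda\}}r^{2} \,+\, \frac{2\varepsilon_{3}\varepsilon_{4}\lambda^{j}}{3r}.
\]
Since $r=r(s)$ and the last term depends only on $s$, differentiating with respect to $t$ gives $H_{t}^{\{j;\lambda\}}=\tfrac{1}{3}K_{t}^{\{j;\lambda\}}r^{2}$, while differentiating with respect to $s$ gives a formula for $H_{s}^{\{j;\lambda\}}$ containing a similar $\tfrac{1}{3}K_{s}^{\{j;\lambda\}}r^{2}$ piece plus purely $s$-dependent terms. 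When these are substituted into $H_{s}K_{t}-H_{t}K_{s}$, the mixed $K_{s}K_{t}r^{2}$ contributions cancel and one obtains the compact identity
\[
H_{s}^{\{j;\lambda\}}K_{t}^{\{j;\lambda\}}-H_{t}^{\{j;\lambda\}}K_{s}^{\{j;\lambda\}}
\;=\;
\frac{2r^{\prime}}{3r^{2}}\bigl(K^{\{j;\lambda\}}r^{3}-\varepsilon_{3}\varepsilon_{4}\lambda^{j}\bigr)\,K_{t}^{\{j;\lambda\}}.
\]

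The first assertion, that $\mathfrak{C}^{\{4;\lambda\}}$ is always $(H,K)_{st}$-Weingarten, then follows at once: inspecting \eqref{fj} one sees that $f_{4}=\sinh w$ has no $t$-dependence, and since by \eqref{Kgenel}--\eqref{Hgenel} both curvatures depend on $(t,w)$ only through $f_{j}$, the factor $K_{t}^{\{4;\lambda\}}$ vanishes identically. For the iff statement, the sufficiency direction is direct: if $r\equiv\text{const}$ then $r^{\prime}=0$, and if $k_{1}\equiv 0$ then \eqref{Kgenel} reduces to a function of $s$ alone so $K_{t}=0$; either way the displayed right-hand side is zero.

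The main obstacle is the converse for $j\in\{1,2,3\}$. The strategy is to fix a point $s_{0}$ where $r^{\prime}(s_{0})\neq 0$ and write $K_{t}=K_{f_{j}}\cdot f_{j,t}$, noting that $f_{j,t}\not\equiv 0$ in $(t,w)$ for $j=1,2,3$. Vanishing of $H_{s}K_{t}-H_{t}K_{s}$ throughout an open set in $(t,w)$ then forces the algebraic identity $K^{\{j;\lambda\}}r^{3}=\varepsilon_{3}\varepsilon_{4}\lambda^{j}$ at $s_{0}$. Substituting the explicit formula \eqref{Kgenel}, clearing the squared denominator, expanding the square on the right, and canceling the $r^{2}k_{1}^{2}f_{j}^{2}D$ terms on both sides (with $D:=r^{\prime\,2}-\lambda\varepsilon_{1}$) collapses the equation to
\[
-D\bigl(D+rr^{\prime\prime}+\varepsilon_{2}\lambda r k_{1}f_{j}\sqrt{D}\bigr) \;=\; 0 .
\]
Since $D>0$ by the hypothesis $r^{\prime\,2}>\lambda\varepsilon_{1}$ of Theorem \ref{teo1}, and the second factor is precisely the denominator inside \eqref{Kgenel} (hence nonzero wherever $K^{\{j;\lambda\}}$ is defined), this is possible only if $k_{1}(s_{0})=0$. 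Therefore at every point either $r^{\prime}=0$ or $k_{1}=0$, and the standard connectedness/analyticity argument (as in the proof of Theorem \ref{swteo}) upgrades this pointwise dichotomy to the global conclusion $k_{1}\equiv 0$ or $r\equiv\text{const}$, completing the proof.
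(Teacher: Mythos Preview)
Your argument is correct and takes a different route from the paper. The paper computes $H_{s}K_{t}-H_{t}K_{s}$ directly from \eqref{Kgenel}--\eqref{Hgenel} to obtain the analogue of \eqref{sw} with $f_{j,t}$ in place of $f_{j,w}$, and then appeals to the factorization method of Theorem~\ref{swteo}: both nontrivial factors in the analogue of \eqref{sw3} are scalar multiples of the denominator $Q(f_{j})=r'^{2}-\lambda\varepsilon_{1}+\varepsilon_{2}\lambda rk_{1}f_{j}\sqrt{r'^{2}-\lambda\varepsilon_{1}}+rr''$, so one is left with $k_{1}r'=0$. You instead exploit the relation \eqref{yhk}, which yields the compact identity
\[
H_{s}K_{t}-H_{t}K_{s}=\frac{2r'}{3r^{2}}\bigl(Kr^{3}-\varepsilon_{3}\varepsilon_{4}\lambda^{j}\bigr)K_{t}
\]
with essentially no computation, and then show algebraically that $Kr^{3}=\varepsilon_{3}\varepsilon_{4}\lambda^{j}$ collapses to $-D\,Q(f_{j})=0$, an impossibility. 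This is more conceptual, makes the role of $r'$ and of $f_{j,t}$ transparent, and unifies the $j=4$ case with the $(t,w)$-Weingarten result (both follow immediately from $K_{t}\equiv 0$). The paper's brute-force route, by contrast, produces the full factorization in one step and matches exactly the template of Theorem~\ref{swteo}.

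One small point in your converse deserves an explicit line. Passing from $(Kr^{3}-\varepsilon_{3}\varepsilon_{4}\lambda^{j})K_{t}=0$ to the identity $Kr^{3}=\varepsilon_{3}\varepsilon_{4}\lambda^{j}$ on an open $(t,w)$-set requires $K_{t}\not\equiv 0$, i.e.\ $K_{f_{j}}\not\equiv 0$; noting only that $f_{j,t}\not\equiv 0$ is not enough. This holds precisely when $k_{1}(s_{0})\neq 0$: differentiating \eqref{Kgenel} in $f_{j}$, the numerator has linear coefficient $rk_{1}^{2}D^{2}$ in $f_{j}$. So the case split should read: either $k_{1}(s_{0})=0$ already, or $K_{t}\not\equiv 0$ and your contradiction applies. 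With that sentence added, the argument is complete.
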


\begin{proof}
From (\ref{Kgenel}) and (\ref{Hgenel}), we have%
\begin{align}
&  H_{s}^{\{j;\lambda\}}K_{t}^{\{j;\lambda\}}-H_{t}^{\{j;\lambda\}}%
K_{s}^{\{j;\lambda\}}\label{st}\\
&  =-\frac{\left(
\begin{array}
[c]{l}%
2{\small \varepsilon_{3}^{2}\varepsilon_{4}^{2}}\lambda^{2j}k_{1}r^{\prime
}\left(  r^{\prime}{}^{2}-\lambda\varepsilon_{1}\right)  ^{2}\\
\left(  \left(  {\small \varepsilon_{2}^{2}\lambda}^{2}-1\right)  k_{1}%
^{2}r^{2}f_{j}^{2}+r^{\prime2}-{\small \lambda\varepsilon}_{1}%
+{\small \varepsilon}_{2}{\small \lambda}k_{1}r\sqrt{r^{\prime}{}^{2}%
-\lambda\varepsilon_{1}}{\small f}_{j}+rr^{\prime\prime}\right) \\
\left(
\begin{array}
[c]{l}%
{\small \varepsilon}_{2}{\small \lambda}\sqrt{r^{\prime}{}^{2}-\lambda
\varepsilon_{1}}\left(  r^{\prime}{}^{2}-\lambda\varepsilon_{1}+rr^{\prime
\prime}\right) \\
+k_{1}r\left(  \left(  2-{\small \varepsilon}_{2}^{2}{\small \lambda}%
^{2}\right)  \left(  r^{\prime2}-{\small \lambda\varepsilon}_{1}\right)
-2\left(  {\small \varepsilon}_{2}^{2}{\small \lambda}^{2}-1\right)
rr^{\prime\prime}\right)  f_{j}%
\end{array}
\right)  f_{j_{t}}%
\end{array}
\right)  }{3r^{4}\left(  r^{\prime}{}^{2}-\lambda\varepsilon_{1}%
+{\small \varepsilon}_{2}{\small \lambda}k_{1}r\sqrt{r^{\prime}{}^{2}%
-\lambda\varepsilon_{1}}{\small f}_{j}+rr^{\prime\prime}\right)  ^{5}%
}.\nonumber
\end{align}
Using (\ref{fj}), for $j=1,2,3$ the proof can be seen with similar method in
Theorem \ref{swteo}. Also, for $j=4$ we have $f_{4_{t}}=0$ and so
$H_{s}^{\{4;\lambda\}}K_{t}^{\{4;\lambda\}}-H_{t}^{\{4;\lambda\}}%
K_{s}^{\{4;\lambda\}}=0$ and this completes the proof.
\end{proof}

\section{\textbf{TUBULAR HYPERSURFACES\ GENERATED BY NON-NULL CURVES IN
}$E_{1}^{4}$}

In this section, we give the above results, which are given for the canal
hypersurfaces in $E_{1}^{4},$ for tubular hypersurfaces.

\begin{theorem}
\textit{The tubular hypersurfaces }$\mathfrak{T}^{\{j;\lambda\}}%
{\small (s,t,w)}$\textit{ which are formed as the envelope of a family of
pseudo hyperspheres or pseudo hyperbolic hyperspheres in }$E_{1}^{4}$\textit{
generated by spacelike or timelike center curves} \textit{can be parametrized
by}
\begin{equation}
\left.
\begin{array}
[c]{l}%
\mathfrak{T}^{\{1;1\}}{\small (s,t,w)=\beta(s)\mp r(s)}\left(  {\small \cos
t\cos wF}_{2}{\small (s)+\sin t\cos wF}_{3}+\sin wF_{4}\right)  ,\\
\mathfrak{T}^{\{2;1\}}{\small (s,t,w)=\beta(s)\mp r(s)}\left(  {\small \cosh
t\sinh wF}_{2}{\small (s)+\cosh wF}_{3}+\sinh t\sinh wF_{4}\right)  ,\\
\mathfrak{T}^{\{2;-1\}}{\small (s,t,w)=\beta(s)\mp r(s)}\left(  {\small \cosh
t\cosh wF}_{2}{\small (s)+\sinh wF}_{3}+\sinh t\cosh wF_{4}\right)  ,\\
\mathfrak{T}^{\{3;1\}}{\small (s,t,w)=\beta(s)\mp r(s)}\left(  {\small \sinh
t\sinh wF}_{2}{\small (s)+\cosh t\sinh wF}_{3}+\cosh wF_{4}\right)  ,\\
\mathfrak{T}^{\{3;-1\}}{\small (s,t,w)=\beta(s)\mp r(s)}\left(  {\small \sinh
t\cosh wF}_{2}{\small (s)+\cosh t\cosh wF}_{3}+\sinh wF_{4}\right)  ,\\
\mathfrak{T}^{\{4;1\}}{\small (s,t,w)=\beta(s)\mp r(s)}\left(  {\small \cosh
wF}_{2}{\small (s)+\sinh t\sinh wF}_{3}+\cosh t\sinh wF_{4}\right)  ,\\
\mathfrak{T}^{\{4;-1\}}{\small (s,t,w)=\beta(s)\mp r(s)}\left(  {\small \sinh
wF}_{2}{\small (s)+\sinh t\cosh wF}_{3}+\cosh t\cosh wF_{4}\right)  .
\end{array}
\right\}  \label{Tub}%
\end{equation}
\textit{Also, }there is no \textit{tubular hypersurface }$\mathfrak{T}%
^{\{1;-1\}}{\small (s,t,w)}$\textit{ which is formed as the envelope of a
family of pseudo hyperbolic hyperspheres in }$E_{1}^{4}$\textit{ generated by
timelike center curves}.

Furthermore t\textit{he tubular hypersurfaces }$\mathfrak{T}^{\{j;0\}}%
{\small (s,t,w)}$\textit{ which are formed as the envelope of a family of null
hypercones in }$E_{1}^{4}$\textit{ generated by spacelike or timelike center
curves} \textit{can be parametrized by} (\ref{canaldenk2}).
\end{theorem}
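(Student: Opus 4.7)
The plan is to specialize the canal hypersurface formulas from Theorem~\ref{teo1} to the constant-radius case, since a tubular hypersurface is by definition a canal hypersurface with $r(s)\equiv \mathrm{const}$, i.e.\ $r'(s)=0$. Thus I would substitute $r'=0$ into each of the eight parametric expressions in (\ref{canaldenk}) and the three alternate expressions in (\ref{canaldenky}), and also into (\ref{canaldenk2}). The coefficient of $F_1$, namely $\pm r r'$, vanishes immediately, so every tubular hypersurface lies in the affine hyperplane through $\beta(s)$ spanned by $F_2(s),F_3(s),F_4(s)$, which is geometrically what one expects.

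Next I would evaluate the radical $\sqrt{r'^{2}-\lambda\varepsilon_{1}}$ (or its alternate $\sqrt{\lambda\varepsilon_{1}-r'^{2}}$) at $r'=0$, splitting on the sign of $\lambda\varepsilon_{1}$. For the four pairs $(j,\lambda)$ with $\lambda\varepsilon_{1}=-1$ (namely $(1,1)$ and $(k,-1)$ for $k\in\{2,3,4\}$) the original formulas (\ref{canaldenk}) apply and the radical reduces to $1$, yielding $\mathfrak{T}^{\{1;1\}}$, $\mathfrak{T}^{\{2;-1\}}$, $\mathfrak{T}^{\{3;-1\}}$ and $\mathfrak{T}^{\{4;-1\}}$ directly. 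For the three pairs $(k,1)$ with $k\in\{2,3,4\}$, where $\lambda\varepsilon_{1}=1$, the applicable formula is the alternate one from Remark~1, (\ref{canaldenky}); its radical $\sqrt{1-r'^{2}}$ also reduces to $1$ at $r'=0$, giving $\mathfrak{T}^{\{2;1\}}$, $\mathfrak{T}^{\{3;1\}}$ and $\mathfrak{T}^{\{4;1\}}$.

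The conceptual step, which I regard as the main obstacle, is ruling out $\mathfrak{T}^{\{1;-1\}}$. Here I would go back to equation (\ref{8}) from the proof of Theorem~\ref{teo1}. Setting $r'=0$ forces $a_{1}=-\varepsilon_{1}\lambda r r_{s}=0$ by (\ref{7y}), and (\ref{8}) reduces to $\varepsilon_{2}a_{2}^{2}+\varepsilon_{3}a_{3}^{2}+\varepsilon_{4}a_{4}^{2}=\lambda r^{2}$. For $j=1$ one has $\varepsilon_{2}=\varepsilon_{3}=\varepsilon_{4}=+1$, so the left-hand side is nonnegative; with $\lambda=-1$ the right-hand side equals $-r^{2}<0$, a contradiction. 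Hence no tubular hypersurface of type $\mathfrak{T}^{\{1;-1\}}$ exists, explaining the gap in the list (\ref{Tub}).

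Finally, for the null-hypercone case $\lambda=0$ the parametrization (\ref{canaldenk2}) does not involve $r(s)$ at all, because the envelope condition $\lambda r^{2}=0$ is automatically absorbed. Therefore $\mathfrak{T}^{\{j;0\}}=\mathfrak{C}^{\{j;0\}}$ for $j=2,3,4$, which finishes the statement. The remaining work is purely bookkeeping: one must match each surviving $(j,\lambda)$ pair to either (\ref{canaldenk}) or (\ref{canaldenky}) according to the sign of $\lambda\varepsilon_{1}-r'^{2}$ before evaluating at $r'=0$.
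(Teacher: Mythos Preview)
Your proposal is correct and follows essentially the same approach as the paper. The paper re-runs the envelope derivation from scratch with constant $r$ to arrive at $a_{1}=0$ and $\varepsilon_{2}a_{2}^{2}+\varepsilon_{3}a_{3}^{2}+\varepsilon_{4}a_{4}^{2}=\lambda r^{2}$, then solves this equation case by case; you instead substitute $r'=0$ directly into the already-solved formulas (\ref{canaldenk}) and (\ref{canaldenky}) and invoke (\ref{8}) only for the impossibility of $\mathfrak{T}^{\{1;-1\}}$, which is a slightly more economical packaging of the same argument.
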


\begin{proof}
Let the center curve $\beta:I\subseteq%
\mathbb{R}
\rightarrow E_{1}^{4}$ be a arc-length parametrized timelike or spacelike
curve with non-zero curvature. Then, the parametrization of the envelope of
pseudo hyperspheres (resp. pseudo hyperbolic hyperspheres or null hypercones)
defining the tubular hypersurfaces $\mathfrak{T}^{\{j;\lambda\}}$ in
$E_{1}^{4}$ can be given by%
\begin{equation}
\mathfrak{T}^{\{j;\lambda\}}{\small (s,t,w)-\beta(s)=a_{1}(s,t,w)F}%
_{1}{\small (s)+a}_{2}{\small (s,t,w)F}_{2}{\small (s)+a_{3}(s,t,w)F}%
_{3}{\small (s)+a_{4}(s,t,w)F}_{4}{\small (s).} \label{1T}%
\end{equation}
Furthermore, since $\mathfrak{T}^{\{j;\lambda\}}(s,t,w)$ lies on the pseudo
hyperspheres (resp. pseudo hyperbolic hyperspheres or null hypercones), we
have%
\begin{equation}
{\small g(}\mathfrak{T}^{\{j;\lambda\}}{\small (s,t,w)-\beta(s),}%
\mathfrak{T}^{\{j;\lambda\}}{\small (s,t,w)-\beta(s))=\lambda r}^{2}
\label{2T}%
\end{equation}
which leads to from (\ref{1T}) that%
\begin{equation}
{\small \varepsilon}_{1}{\small a}_{1}^{2}{\small +\varepsilon}_{2}%
{\small a}_{2}^{2}{\small +\varepsilon}_{3}{\small a}_{3}^{2}%
{\small +\varepsilon}_{4}{\small a}_{4}^{2}{\small =\lambda r}^{2} \label{3T}%
\end{equation}
and%
\begin{equation}
{\small \varepsilon}_{1}{\small a}_{1}{\small a}_{1_{s}}{\small +\varepsilon
}_{2}{\small a}_{2}{\small a}_{2_{s}}{\small +\varepsilon}_{3}{\small a}%
_{3}{\small a}_{3_{s}}{\small +\varepsilon}_{4}{\small a}_{4}{\small a}%
_{4_{s}}{\small =0,} \label{4T}%
\end{equation}
where $\lambda=1$, $\lambda=-1$ and $\lambda=0$ if the tubular hypersurfaces
are obtained by pseudo hyperspheres, pseudo hyperbolic hyperspheres and null
hypercones, respectively; $r$ is constant radius $a_{i}=a_{i}(s,t,w),$
$a_{i_{s}}=\frac{\partial a_{i}(s,t,w)}{\partial s},$ and so on.

With similar procedure in the proof of the Theorem \ref{teo1}, we reach that
\begin{equation}
{\small a=0.} \label{7yT}%
\end{equation}
Hence, using (\ref{7yT}) in (\ref{3T}), we have%
\begin{equation}
{\small \varepsilon}_{2}{\small a}_{2}^{2}{\small +\varepsilon}_{3}%
{\small a}_{3}^{2}{\small +\varepsilon}_{4}{\small a}_{4}^{2}{\small =\lambda
r}^{2}{\small .} \label{8T}%
\end{equation}
From (\ref{1T}) and (\ref{8T}), we obtain the parametric expressions of the
tubular hypersurfaces $\mathfrak{T}^{\{j;\lambda\}}(s,t,w)$ as (\ref{Tub}) and
(\ref{canaldenk2}).
\end{proof}

Now, let we give the Gaussian and mean curvatures of tubular hypersurfaces
$\mathfrak{T}^{\{j;\lambda\}}(s,t,w)$ given by (\ref{Tub}) with similar method
used for canal hypersurfaces.

\begin{theorem}
The Gaussian and mean curvatures of tubular hypersurfaces $\mathfrak{T}%
^{\{j;\lambda\}}(s,t,w)$, given by (\ref{Tub}) in $E_{1}^{4}$, are
\begin{equation}
\left.
\begin{array}
[c]{l}%
{\small K}^{\{1;1\}}=\frac{k_{1}\cos t{\small \cos w}}{r^{2}\left(
1+rk_{1}\cos t{\small \cos w}\right)  },~{\small H}^{\{1;1\}}=\frac
{2+3rk_{1}\cos t{\small \cos w}}{3r\left(  1+rk_{1}\cos t{\small \cos
w}\right)  };\\
\\
{\small K}^{\{2;1\}}=\frac{k_{1}{\small \cosh t\sinh w}}{r^{2}\left(
1+rk_{1}{\small \cosh t\sinh w}\right)  },~{\small H}^{\{2;1\}}=\frac
{2+3rk_{1}{\small \cosh t\sinh w}}{3r\left(  1+rk_{1}{\small \cosh t\sinh
w}\right)  };\\
\\
{\small K}^{\{2;-1\}}=\frac{k_{1}{\small \cosh t\cosh w}}{r^{2}\left(
1+rk_{1}{\small \cosh t\cosh w}\right)  },~{\small H}^{\{2;-1\}}%
=\frac{2+3rk_{1}{\small \cosh t\cosh w}}{3r\left(  1+rk_{1}{\small \cosh
t\cosh w}\right)  };\\
\\
{\small K}^{\{3;1\}}=\frac{k_{1}{\small \sinh t\sinh w}}{r^{2}\left(
1-rk_{1}{\small \sinh t\sinh w}\right)  },~{\small H}^{\{3;1\}}=\frac
{2-3rk_{1}{\small \sinh t\sinh w}}{3r\left(  -1+rk_{1}{\small \sinh t\sinh
w}\right)  };\\
\\
{\small K}^{\{3;-1\}}=\frac{k_{1}{\small \sinh t\cosh w}}{r^{2}\left(
-1+rk_{1}{\small \sinh t\cosh w}\right)  },~{\small H}^{\{3;-1\}}%
=\frac{2-3rk_{1}{\small \sinh t\cosh w}}{3r\left(  1-rk_{1}{\small \sinh
t\cosh w}\right)  };\\
\\
{\small K}^{\{4;1\}}=\frac{k_{1}{\small \cosh w}}{r^{2}\left(  1-rk_{1}%
{\small \cosh w}\right)  },~{\small H}^{\{4;1\}}=\frac{2-3rk_{1}{\small \cosh
w}}{3r\left(  -1+rk_{1}{\small \cosh w}\right)  };\\
\\
{\small K}^{\{4;-1\}}=\frac{k_{1}{\small \sinh w}}{r^{2}\left(  1-rk_{1}%
{\small \sinh w}\right)  },~{\small H}^{\{4;-1\}}=\frac{2-3rk_{1}{\small \sinh
w}}{3r\left(  -1+rk_{1}{\small \sinh w}\right)  }.
\end{array}
\right\}  \label{Tcurvature}%
\end{equation}

\end{theorem}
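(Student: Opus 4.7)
The plan is to specialize the canal-hypersurface curvature formulas (\ref{Kgenel}) and (\ref{Hgenel}) to the constant-radius setting, in which $r' = r'' = 0$ and consequently $r'^2 - \lambda\varepsilon_1 = -\lambda\varepsilon_1$. The square root $\sqrt{r'^2 - \lambda\varepsilon_1}$ that appears throughout is then real precisely when $\lambda\varepsilon_1 = -1$, and this condition splits the seven admissible $(j,\lambda)$ pairs into two groups that must be handled separately.

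First, for $(j,\lambda) \in \{(1,1),(2,-1),(3,-1),(4,-1)\}$ we have $\lambda\varepsilon_1 = -1$, so (\ref{Kgenel}) and (\ref{Hgenel}) apply literally. Setting $r' = r'' = 0$, the numerator of $K^{\{j;\lambda\}}$ collapses to $\varepsilon_3\varepsilon_4\lambda^j k_1 f_j(rk_1f_j + \varepsilon_2\lambda)$ and the denominator to $r^2(1 + \varepsilon_2\lambda rk_1f_j)^2$; after cancelling a common factor of $1 + \varepsilon_2\lambda rk_1f_j$ we obtain
\[
K^{\{j;\lambda\}} = \frac{\varepsilon_3\varepsilon_4\lambda^j\, k_1 f_j}{r^2(1 + \varepsilon_2\lambda\, rk_1 f_j)},
\]
and an analogous reduction of (\ref{Hgenel}) yields
\[
H^{\{j;\lambda\}} = \frac{\varepsilon_3\varepsilon_4\lambda^j}{3}\left(\frac{2}{r} + \frac{\varepsilon_2\lambda\, k_1 f_j}{1 + \varepsilon_2\lambda\, rk_1 f_j}\right).
\]
Substituting the specific signs $(\varepsilon_1,\varepsilon_2,\varepsilon_3,\varepsilon_4)$ and the explicit $f_j$ from (\ref{fj}) then recovers the four corresponding rows of (\ref{Tcurvature}). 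The excluded combination $(j,\lambda) = (1,-1)$ would require $0 = r'^2 > \lambda\varepsilon_1 = 1$, which is impossible, consistent with the absence of $\mathfrak{T}^{\{1;-1\}}$ from (\ref{Tub}).

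For the remaining three pairs $(j,\lambda) \in \{(2,1),(3,1),(4,1)\}$ one has $\lambda\varepsilon_1 = +1$, and the canal-hypersurface proof cannot be invoked because its square root becomes imaginary at $r' = 0$. Here I would instead start from the alternative parametrizations (\ref{canaldenky}), specialize to $r' = r'' = 0$, and rerun the shape-operator computation: form the tangent vectors $\mathfrak{T}_s,\mathfrak{T}_t,\mathfrak{T}_w$ via the appropriate Frenet formulas (\ref{frenet}), compute the first and second fundamental forms, invert $[g_{ij}]$, and extract $K$ and $H$ from (\ref{yy4}) and (\ref{yy5}). The resulting formulas are the formal analogues of the first-group ones under the swap $\cosh w \leftrightarrow \sinh w$ inside $f_j$, which is exactly what the $(2,1),(3,1),(4,1)$ rows of (\ref{Tcurvature}) require. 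The main obstacle is precisely this second group, since it is not a direct corollary of the canal theorem; nevertheless the computation is strictly parallel to the canal case, and the real burden is only in tracking the signs $\varepsilon_i$ and the hyperbolic/trigonometric substitutions across the remaining cases.
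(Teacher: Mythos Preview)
Your overall strategy is sound and in fact more efficient than what the paper does. The paper offers no detailed argument for this theorem; it simply says the curvatures are obtained ``with similar method used for canal hypersurfaces,'' i.e.\ by recomputing the fundamental forms and shape operator from scratch for each of the seven parametrizations in (\ref{Tub}). Your split into the four cases with $\lambda\varepsilon_1=-1$ (where (\ref{Kgenel}) and (\ref{Hgenel}) specialize directly) versus the three cases $(2,1),(3,1),(4,1)$ with $\lambda\varepsilon_1=+1$ (where one must work from the alternative parametrizations (\ref{canaldenky})) is exactly the right diagnosis, and the observation that the second group is obtained from the first by the formal swap $\cosh w\leftrightarrow\sinh w$ inside $f_j$ is correct.

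One small algebraic slip: in your reduced formula for $K^{\{j;\lambda\}}$ you dropped a factor of $\varepsilon_2\lambda$. After cancelling, the numerator $rk_1f_j+\varepsilon_2\lambda$ equals $\varepsilon_2\lambda\,(1+\varepsilon_2\lambda\,rk_1f_j)$, so the correct general expression is
\[
K^{\{j;\lambda\}}=\frac{\varepsilon_2\varepsilon_3\varepsilon_4\,\lambda^{\,j+1}\,k_1f_j}{r^2\bigl(1+\varepsilon_2\lambda\,rk_1f_j\bigr)}.
\]
This does not affect the cases $(1,1)$ and $(2,-1)$, where $\varepsilon_2\lambda=1$, but for $(3,-1)$ and $(4,-1)$ one has $\varepsilon_2\lambda=-1$, and the missing sign is precisely what produces the denominators $-1+rk_1\sinh t\cosh w$ and $1-rk_1\sinh w$ seen in (\ref{Tcurvature}). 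Your formula for $H^{\{j;\lambda\}}$ already carries the correct $\varepsilon_2\lambda$ factor, so the issue is isolated to $K$.
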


From (\ref{Tcurvature}), we can state the following theorem:

\begin{theorem}
The tubular hypersurfaces $\mathfrak{T}^{\{j;\lambda\}}(s,t,w)$, given by
(\ref{Tub}) in $E_{1}^{4}$, are $(H,K)_{tw}$-Weingarten, $(H,K)_{ts}%
$-Weingarten and $(H,K)_{sw}$-Weingarten hypersurfaces.
\end{theorem}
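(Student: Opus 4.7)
The plan is to observe that for each tubular hypersurface $\mathfrak{T}^{\{j;\lambda\}}$, the pair $(H^{\{j;\lambda\}},K^{\{j;\lambda\}})$ in \eqref{Tcurvature} depends on the three parameters $s,t,w$ through a \emph{single} composite function, and then exploit that functional dependence to make all three Jacobian determinants in \eqref{w1} vanish simultaneously.

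More precisely, in each of the seven cases listed in \eqref{Tcurvature} one can introduce a scalar
\[
P^{\{j;\lambda\}}(s,t,w) \;=\; r\,k_{1}(s)\,\phi_{j}(t,w),
\]
where $\phi_{j}(t,w)$ is the relevant trigonometric/hyperbolic function (for example $\cos t\cos w$ when $j=1$, $\cosh t\sinh w$ for $\mathfrak{T}^{\{2;1\}}$, $\sinh w$ for $\mathfrak{T}^{\{4;-1\}}$, and so on). Inspection of \eqref{Tcurvature} shows that one can write
\[
K^{\{j;\lambda\}} \;=\; \mathcal{K}\!\bigl(P^{\{j;\lambda\}}\bigr),\qquad
H^{\{j;\lambda\}} \;=\; \mathcal{H}\!\bigl(P^{\{j;\lambda\}}\bigr),
\]
where $r$ enters only as a constant parameter (this is the point at which the hypothesis that the radius be constant is used; it is exactly the failure of such a representation when $r=r(s)$ that forced extra conditions in Theorems~\ref{swteo} and its companions for canal hypersurfaces).

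Granted this, the chain rule gives, for every $x\in\{s,t,w\}$,
\[
K^{\{j;\lambda\}}_{x} \;=\; \mathcal{K}'(P)\,P_{x},\qquad
H^{\{j;\lambda\}}_{x} \;=\; \mathcal{H}'(P)\,P_{x},
\]
so that for any pair $x,y\in\{s,t,w\}$,
\[
H^{\{j;\lambda\}}_{x}K^{\{j;\lambda\}}_{y}-H^{\{j;\lambda\}}_{y}K^{\{j;\lambda\}}_{x}
\;=\;\mathcal{H}'(P)\mathcal{K}'(P)\bigl(P_{x}P_{y}-P_{y}P_{x}\bigr)\;=\;0.
\]
Consequently all three identities in \eqref{w1} are satisfied identically, which is exactly the content of the $(H,K)_{tw}$-, $(H,K)_{ts}$- and $(H,K)_{sw}$-Weingarten conditions.

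The routine part is simply checking, case by case, that the seven expressions in \eqref{Tcurvature} really do assume the form $K=\mathcal{K}(P^{\{j;\lambda\}})$ and $H=\mathcal{H}(P^{\{j;\lambda\}})$ with the same $P^{\{j;\lambda\}}$ for both curvatures; this is a direct read-off. The only mild obstacle is bookkeeping across the seven sign/causal configurations and making sure one picks the right $\phi_{j}$ in each row (some cases use $\cos t\cos w$, some use $\cosh t\cosh w$, $\cosh t\sinh w$, $\sinh t\cosh w$, $\sinh t\sinh w$, $\cosh w$, or $\sinh w$), but once this table is tabulated the one-variable chain-rule argument above applies verbatim and finishes the proof.
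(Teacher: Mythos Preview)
Your argument is correct and is essentially the same approach the paper takes: the paper does not write out a proof but simply declares the theorem as an immediate consequence of the curvature formulas \eqref{Tcurvature}, and your single-variable observation $K=\mathcal{K}(P)$, $H=\mathcal{H}(P)$ with $P=r\,k_{1}(s)\,\phi_{j}(t,w)$ is exactly the clean way to make that implication explicit. Your chain-rule identity is the right mechanism, and the case check you describe is indeed a direct read-off from \eqref{Tcurvature}.
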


\section{\textbf{VISUALIZATION}}

In this section, we construct examples for canal hypersurfaces $\mathfrak{C}%
^{\{1;\lambda\}}(s,t,w)$ and $\mathfrak{C}^{\{3;\lambda\}}(s,t,w)$ which are
formed as the envelope of a family of pseudo hyperspheres or pseudo hyperbolic
hyperspheres in\textit{ }$E_{1}^{4}$ with the aid of a timelike curve and a
spacelike curve with timelike binormal vector, seperately.

Firstly, let we take the unit speed timelike curve%
\begin{equation}
{\small \beta}_{1}{\small (s)=}\left(  2\sinh s,2\cosh s,\sqrt{3}\cos
s,\sqrt{3}\sin s\right)  \label{ex1y}%
\end{equation}
in $E_{1}^{4}$. The Frenet vectors and curvatures of the curve (\ref{ex1y})
are%
\begin{equation}
\left.
\begin{array}
[c]{l}%
F_{1}^{{\small \beta}_{1}}=\left(  2\cosh s,2\sinh s,-\sqrt{3}\sin s,\sqrt
{3}\cos s\right)  ,\\
F_{2}^{{\small \beta}_{1}}=\left(  \frac{2}{\sqrt{7}}\sinh s,\frac{2}{\sqrt
{7}}\cosh s,-\sqrt{\frac{3}{7}}\cos s,-\sqrt{\frac{3}{7}}\sin s\right)  ,\\
F_{3}^{{\small \beta}_{1}}=\left(  -\sqrt{3}\cosh s,-\sqrt{3}\sinh s,2\sin
s,-2\cos s\right)  ,\\
F_{4}^{{\small \beta}_{1}}=\left(  \sqrt{\frac{3}{7}}\sinh s,\sqrt{\frac{3}%
{7}}\cosh s,\frac{2}{\sqrt{7}}\cos s,\frac{2}{\sqrt{7}}\sin s\right)  ,\\
\\
k_{1}^{{\small \beta}_{1}}=\sqrt{7},\text{ }k_{2}^{{\small \beta}_{1}}%
=4\sqrt{\frac{3}{7}},\text{ }k_{3}^{{\small \beta}_{1}}=\frac{1}{\sqrt{7}}.
\end{array}
\right\}  \label{ex2y}%
\end{equation}
From (\ref{canaldenk}), the canal hypersurfaces $\mathfrak{C}^{\{1;\lambda
\}}(s,t,w)$ which are formed as the envelope of a family of pseudo
hyperspheres or pseudo hyperbolic hyperspheres in\textit{ }$E_{1}^{4}$
generated by the curve (\ref{ex1y}) are%
\begin{align}
&  \mathfrak{C}^{\{1;1\}}(s,t,w)=\label{ex3y}\\
&  \left(
\begin{array}
[c]{c}%
-2s\cosh s\left(  -4+\sqrt{15}\cos w\sin t\right)  +\frac{2}{7}\sinh s\left(
7+\sqrt{35}s\left(  2\cos t\cos w+\sqrt{3}\sin w\right)  \right)  ,\\
-2s\sinh s\left(  -4+\sqrt{15}\cos w\sin t\right)  +\frac{2}{7}\cosh s\left(
7+\sqrt{35}s\left(  2\cos t\cos w+\sqrt{3}\sin w\right)  \right)  ,\\
-4s\sin s\left(  \sqrt{3}-\sqrt{5}\cos w\sin t\right)  +\cos s\left(  \sqrt
{3}-2\sqrt{\frac{5}{7}}s\left(  \sqrt{3}\cos t\cos w-2\sin w\right)  \right)
,\\
\sin s\left(  \sqrt{3}-2\sqrt{\frac{5}{7}}s\left(  \sqrt{3}\cos t\cos w-2\sin
w\right)  \right)  +4s\cos s\left(  \sqrt{3}-\sqrt{5}\cos w\sin t\right)
\nonumber
\end{array}
\right)
\end{align}
and%
\begin{align}
&  \mathfrak{C}^{\{1;-1\}}(s,t,w)=\label{ex4y}\\
&  \left(
\begin{array}
[c]{l}%
-2s\left(  4+3\cos w\sin t\right)  \cosh s+\frac{2}{7}\left(  7+2\sqrt
{21}s\cos t\cos w+3\sqrt{7}s\sin w\right)  \sinh s,\\
-2s\left(  4+3\cos w\sin t\right)  \sinh s+\frac{2}{7}\left(  7+\sqrt
{21}s\left(  2\cos t\cos w+\sqrt{3}\sin w\right)  \right)  \cosh s,\\
4\sqrt{3}s\left(  1+\cos w\sin t\right)  \sin s+\left(  \sqrt{3}-\frac
{2}{\sqrt{7}}s\left(  3\cos t\cos w-2\sqrt{3}\sin w\right)  \right)  \cos s,\\
\sin s\left(  \sqrt{3}-\frac{2}{\sqrt{7}}s\left(  3\cos t\cos w-2\sqrt{3}\sin
w\right)  \right)  -4\sqrt{3}s\left(  1+\cos w\sin t\right)  \cos s
\end{array}
\right)  ,\text{ \ \ \ \ \ }\nonumber
\end{align}
where the radius function has been taken as $r(s)=2s.$ From (\ref{Kgenel}),
(\ref{Hgenel}) and (\ref{aslitoplu}), the Gaussian, mean and principal
curvatures of the canal hypersurfaces $\mathfrak{C}^{\{1;1\}}$ and
$\mathfrak{C}^{\{1;-1\}}$ are obtained as%
\begin{equation}
\left.
\begin{array}
[c]{l}%
K^{\{1;1\}}=\frac{5\left(  \sqrt{35}+14s\cos t\cos w\right)  \cos t\cos
w}{4s^{2}\left(  5+2\sqrt{35}s\cos t\cos w\right)  ^{2}},\\
H^{\{1;1\}}=\frac{1}{3}\left(  \frac{1}{s}+\frac{5\left(  \sqrt{35}+14s\cos
t\cos w\right)  \cos t\cos w}{\left(  5+2\sqrt{35}s\cos t\cos w\right)  ^{2}%
}\right)  ,\\
{\small \mu}_{1}^{\{1;1\}}{\small =\mu}_{2}^{\{1;1\}}{\small =}\frac{1}%
{2s},\text{ }{\small \mu}_{3}^{\{1;1\}}{\small =}\frac{5\left(  \sqrt
{35}+14s\cos t\cos w\right)  \cos t\cos w}{\left(  5+2\sqrt{35}s\cos t\cos
w\right)  ^{2}}%
\end{array}
\right\}  \label{ex5y}%
\end{equation}
and%
\begin{equation}
\left.
\begin{array}
[c]{l}%
K^{\{1;-1\}}=\frac{3\left(  \sqrt{21}-14s\cos t\cos w\right)  \cos t\cos
w}{4s^{2}\left(  3-2\sqrt{21}s\cos t\cos w\right)  ^{2}},\\
H^{\{1;-1\}}=\frac{-3+5\sqrt{21}s\cos t\cos w-42s^{2}\cos^{2}t\cos^{2}%
w}{s\left(  3-2\sqrt{21}s\cos t\cos w\right)  ^{2}},\\
{\small \mu}_{1}^{\{1;-1\}}{\small =\mu}_{2}^{\{1;-1\}}{\small =}\frac{-1}%
{2s},\text{ }{\small \mu}_{3}^{\{1;-1\}}{\small =}\frac{3\left(  \sqrt
{21}-14s\cos t\cos w\right)  \cos t\cos w}{\left(  3-2\sqrt{21}s\cos t\cos
w\right)  ^{2}},
\end{array}
\right\}  \label{ex6y}%
\end{equation}
respectively. In the following figures, one can see the projections of the
canal hypersurfaces (\ref{ex3y}) and (\ref{ex4y}) for $w=2$ and $r(s)=2s$ into
$x_{2}x_{3}x_{4}$-spaces in (A) and (B), respectively.

\begin{figure}[H]
\centering
\includegraphics[
height=2.8in, width=4.9in
]{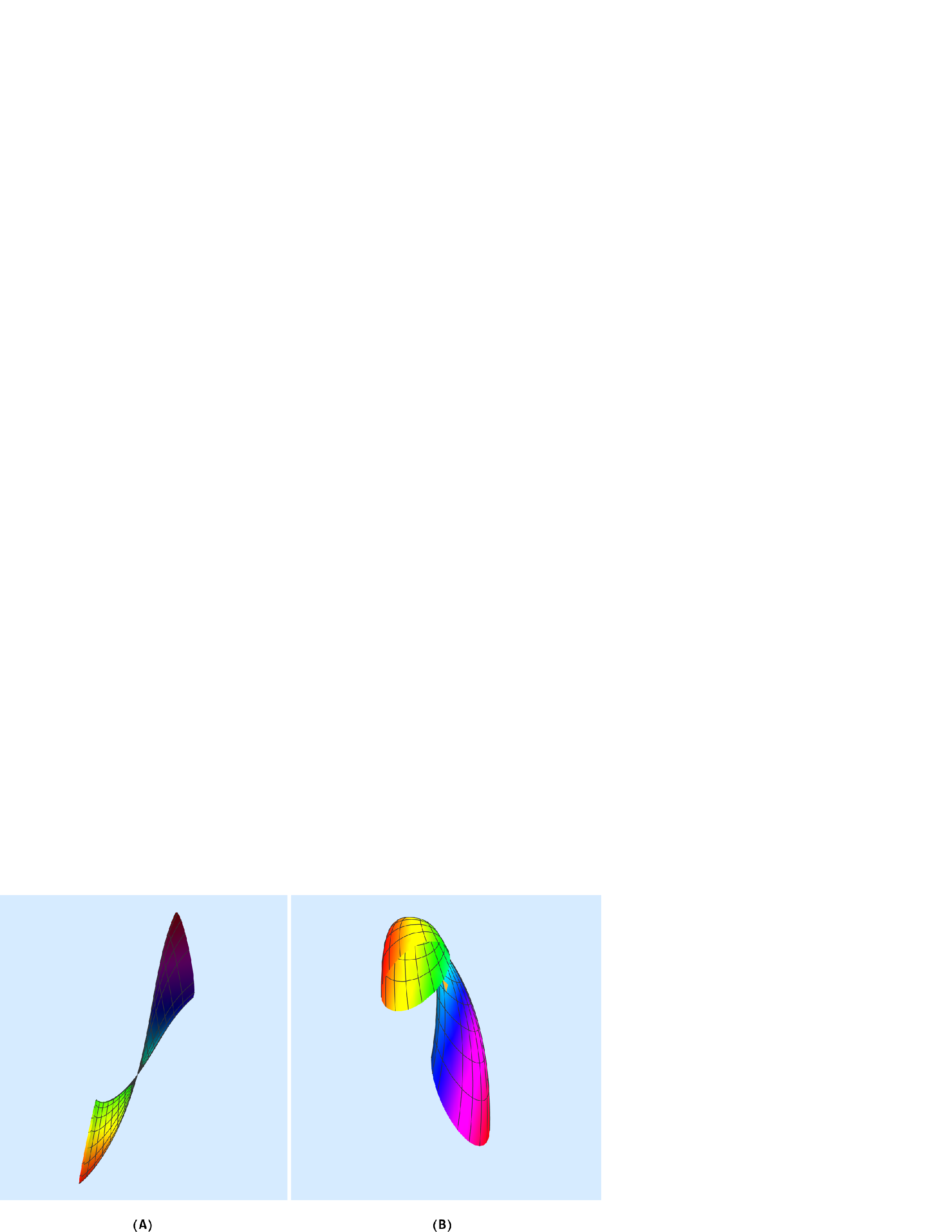}\caption{ }%
\label{fig:1}%
\end{figure}

Secondly, let we take the unit speed spacelike curve with timelike binormal
vector%
\begin{equation}
{\small \beta}_{2}{\small (s)=}\left(  \sqrt{3}\sinh s,\sqrt{3}\cosh s,2\cos
s,2\sin s\right)  \label{ex1}%
\end{equation}
in $E_{1}^{4}$. The Frenet vectors and curvatures of the curve (\ref{ex1}) are%
\begin{equation}
\left.
\begin{array}
[c]{l}%
F_{1}^{{\small \beta}_{2}}=\left(  \sqrt{3}\cosh s,\sqrt{3}\sinh s,-2\sin
s,2\cos s\right)  ,\\
F_{2}^{{\small \beta}_{2}}=\left(  \sqrt{\frac{3}{7}}\sinh s,\sqrt{\frac{3}%
{7}}\cosh s,-\frac{2}{\sqrt{7}}\cos s,-\frac{2}{\sqrt{7}}\sin s\right)  ,\\
F_{3}^{{\small \beta}_{2}}=\left(  2\cosh s,2\sinh s,-\sqrt{3}\sin s,\sqrt
{3}\cos s\right)  ,\\
F_{4}^{{\small \beta}_{2}}=\left(  \frac{2}{\sqrt{7}}\sinh s,\frac{2}{\sqrt
{7}}\cosh s,\sqrt{\frac{3}{7}}\cos s,\sqrt{\frac{3}{7}}\sin s\right)  ,\\
\\
k_{1}^{{\small \beta}_{2}}=\sqrt{7},\text{ }k_{2}^{{\small \beta}_{2}}%
=4\sqrt{\frac{3}{7}},\text{ }k_{3}^{{\small \beta}_{2}}=\frac{1}{\sqrt{7}}.
\end{array}
\right\}  \label{ex2}%
\end{equation}
From (\ref{canaldenk}), the canal hypersurfaces $\mathfrak{C}^{\{3;\lambda
\}}(s,t,w)$ which are formed as the envelope of a family of pseudo
hyperspheres or pseudo hyperbolic hyperspheres in\textit{ }$E_{1}^{4}$
generated by the curve (\ref{ex1}) are%
\begin{align}
&  \mathfrak{C}^{\{3;1\}}(s,t,w)=\label{ex3}\\
&  \left(
\begin{array}
[c]{l}%
\frac{\sqrt{3}}{7}\left(  28s\left(  -1+\cosh t\cosh w\right)  \cosh s+\left(
7+2\sqrt{21}s\cosh w\sinh t+4\sqrt{7}s\sinh w\right)  \right)  \sinh s,\\
\frac{\sqrt{3}}{7}\left(  28s\left(  -1+\cosh t\cosh w\right)  \sinh s+\left(
7+2\sqrt{21}s\cosh w\sinh t+4\sqrt{7}s\sinh w\right)  \right)  \cosh s,\\
-6s\sin s\cosh t\cosh w+2\left(  \cos s+4s\sin s\right)  +\frac{2s}{\sqrt{7}%
}\left(  -2\sqrt{3}\cosh w\sinh t+3\sinh w\right)  \cos s,\\
2s\left(  -4+3\cosh t\cosh w\right)  \cos s+\frac{2}{7}\left(  7-2\sqrt
{21}s\cosh w\sinh t+3\sqrt{7}\sinh w\right)  \sin s
\end{array}
\right) \nonumber
\end{align}
and%
\begin{align}
&  \mathfrak{C}^{\{3;-1\}}(s,t,w)=\label{ex4}\\
&  \left(
\begin{array}
[c]{c}%
4s\left(  \sqrt{3}+\sqrt{5}\cosh t\cosh w\right)  \cosh s+\left(  \sqrt
{3}+2\sqrt{\frac{5}{7}}s\left(  \sqrt{3}\cosh w\cosh t+2\sinh w\right)
\right)  \sinh s,\\
4s\left(  \sqrt{3}+\sqrt{5}\cosh t\cosh w\right)  \sinh s+\left(  \sqrt
{3}+2\sqrt{\frac{5}{7}}s\left(  \sqrt{3}\cosh w\cosh t+2\sinh w\right)
\right)  \cosh s,\\
-2s\left(  4+\sqrt{15}\cosh t\cosh w\right)  \sin s+\frac{2}{7}\left(
7+\sqrt{35}s\left(  -2\cosh w\sinh t+\sqrt{3}\sinh w\right)  \right)  \cos
s,\\
2s\left(  4+\sqrt{15}\cosh t\cosh w\right)  \cos s+\frac{2}{7}\left(
7+\sqrt{35}s\left(  -2\cosh w\sinh t+\sqrt{3}\sinh w\right)  \right)  \sin s
\end{array}
\right)  ,\nonumber
\end{align}
where the radius function has been taken as $r(s)=2s.$ From (\ref{Kgenel}),
(\ref{Hgenel}) and (\ref{aslitoplu}), the Gaussian, mean and principal
curvatures of the canal hypersurfaces $\mathfrak{C}^{\{3;1\}}$ and
$\mathfrak{C}^{\{3;-1\}}$ are obtained as%
\begin{equation}
\left.
\begin{array}
[c]{l}%
K^{\{3;1\}}=-\frac{\left(  \sqrt{21}+14s\cosh w\sinh t\right)  \cosh w\sinh
t}{4s^{2}\left(  \sqrt{3}+2\sqrt{7}s\cosh w\sinh t\right)  ^{2}},\\
H^{\{3;1\}}=-\frac{3+5\sqrt{21}s\cosh w\sinh t+42s^{2}\cosh^{2}w\sinh^{2}%
t}{s\left(  3+2\sqrt{21}s\cosh w\sinh t\right)  ^{2}},\\
{\small \mu}_{1}^{\{3;1\}}{\small =\mu}_{2}^{\{3;1\}}{\small =}\frac{-1}%
{2s},\text{ }{\small \mu}_{3}^{\{3;1\}}{\small =}-\frac{3\left(  \sqrt
{21}+14s\cosh w\sinh t\right)  \cosh w\sinh t}{\left(  3+2\sqrt{21}s\cosh
w\sinh t\right)  ^{2}}%
\end{array}
\right\}  \label{ex5}%
\end{equation}
and%
\begin{equation}
\left.
\begin{array}
[c]{l}%
K^{\{3;-1\}}=\frac{5\left(  -\sqrt{35}+14s\cosh w\sinh t\right)  \cosh w\sinh
t}{4s^{2}\left(  5-2\sqrt{35}s\cosh w\sinh t\right)  ^{2}},\\
H^{\{3;-1\}}=\frac{1}{3}\left(  \frac{1}{s}+\frac{5\left(  -\sqrt{35}+14s\cosh
w\sinh t\right)  \cosh w\sinh t}{\left(  5-2\sqrt{35}s\cosh w\sinh t\right)
^{2}}\right)  ,\\
{\small \mu}_{1}^{\{3;-1\}}{\small =\mu}_{2}^{\{3;-1\}}{\small =}\frac{1}%
{2s},\text{ }{\small \mu}_{3}^{\{3;-1\}}{\small =}\frac{5\left(  -\sqrt
{35}+14s\cosh w\sinh t\right)  \cosh w\sinh t}{\left(  5-2\sqrt{35}s\cosh
w\sinh t\right)  ^{2}},
\end{array}
\right\}  \label{ex6}%
\end{equation}
respectively. In the following figures, one can see the projections of the
canal hypersurfaces (\ref{ex3}) and (\ref{ex4}) for $w=2$ and $r(s)=2s$ into
$x_{2}x_{3}x_{4}$-spaces in (A) and (B), respectively.

\begin{figure}[H]
\centering
\includegraphics[
height=2.8in, width=4.9in
]{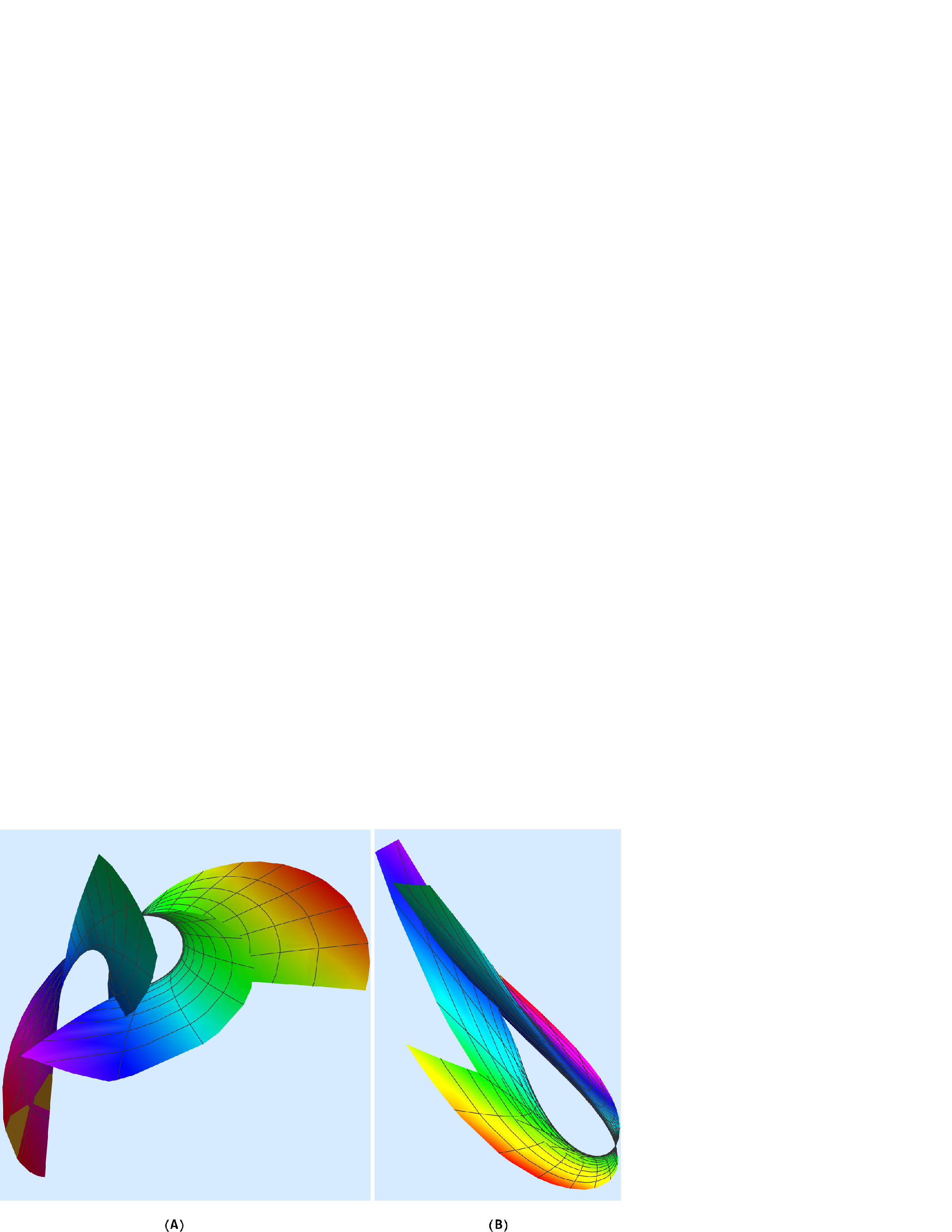}\caption{ }%
\label{fig:1}%
\end{figure}

\bigskip

\end{document}